\pgfplotsset{compat=1.18}
\DeclarePairedDelimiter{\floor}{\lfloor}{\rfloor}
\definecolor{uuuuuu}{rgb}{0.27,0.27,0.27}
\definecolor{sqsqsq}{rgb}{0.1255,0.1255,0.1255}
\newtheorem{definition}{Definition} [section]
\newtheorem{theorem}[definition]{Theorem}
\newtheorem{lemma}[definition]{Lemma}
\newtheorem{proposition}[definition]{Proposition}
\newtheorem{conjecture}[definition]{Conjecture}
\newtheorem{claim}[definition]{Claim}
\newtheorem{fact}[definition]{Fact}
\newcommand{\hide}[1]{}
\begin{document}
\title{\bf\Large A note on extremal constructions\\ for the Erd{\H o}s--Rademacher problem}

\date{\today}
%%%%%%%%%%%%%%%%%%%%%%%%%%%%%%%%%%%%%%%%%%%%%%%%%%%%
\author{Xizhi Liu\thanks{Research was supported by ERC Advanced Grant 101020255 and
            Leverhulme Research Project Grant RPG-2018-424. Email: \texttt{xizhi.liu.ac@gmail.com}} }
\author{Oleg Pikhurko\thanks{Research was supported by ERC Advanced Grant 101020255 and
            Leverhulme Research Project Grant RPG-2018-424. Email: \texttt{o.pikhurko@warwick.ac.uk}} }
\affil{Mathematics Institute and DIMAP,\\
            University of Warwick,\\
            Coventry, CV4 7AL, UK}
            
%%%%%%%%%%%%%%%%%%%%%%%%%%%%%%%%%%%%%%%%%%%%%%%%%%%
\maketitle
%\footnote{footnote}
%%%%%%%%%%%%%%%%%%%%%%%%%%%%%%%%%%%%%%%%%%%%%%%%%
\begin{abstract}
For given positive integers $r\ge 3$, $n$ and $e\le \binom{n}{2}$, the famous Erd\H os--Rademacher problem asks for the minimum number of $r$-cliques in a graph with $n$ vertices and $e$ edges. A conjecture of Lov\'asz and Simonovits from the 1970s states that, for every $r\ge 3$, if $n$ is sufficiently large then, for every $e\le \binom{n}{2}$, at least one extremal graph can be obtained from a complete partite graph by adding a triangle-free graph into one part. 
 
In this note, we explicitly write the minimum number of $r$-cliques predicted by the above conjecture. Also, we describe what we believe to be the set of extremal graphs for any $r\ge 4$ and all large~$n$, amending the previous conjecture of Pikhurko and Raz\-bo\-rov.
\medskip

\noindent\textbf{Keywords:} Erd{\H o}s--Rademacher problem,  Lov{\' a}sz--Simonovits conjecture, Clique density theorem. 
\end{abstract}
%%%%%%%%%%%%%%%%%%%%%%%%%%%%%%%%%%%%%%%%%%%%%%%%%

%%%%%%%%%%%%%%%%%%%%%%%%%%%%%%
\section{Introduction}\label{SEC:introduction}
Given integers $n \ge r \ge 2$, 
let $T_{r}(n)$ denote the balanced complete $r$-partite graph on $n$ vertices, and let $t_{r}(n)$ denote the number of edges in $T_{r}(n)$. 
The celebrated Tur\'{a}n Theorem~\cite{T41} (with the case $r=3$ proved earlier by Mantel~\cite{Mantel07}) states that, for $n \ge r \ge 3$, every $n$-vertex graph with at least $t_{r-1}(n)+1$ edges contains a copy of an \emph{$r$-clique} $K_{r}$, that is, a complete graph on $r$ vertices.  
An unpublished result of Rademacher from 1941 (see~\cite{Erdos55}) states that, in fact, every $n$-vertex graph with $t_{2}(n)+1$ edges contains at least $\floor*{n/2}$ copies of $K_{3}$. The graph obtained from $T_{2}(n)$ by adding one edge to the larger part shows that the bound $\floor*{n/2}$ is tight.   
Rademacher's theorem motivated Erd{\H o}s~\cite{Erdos55} to consider the following more general question, now referred to as the \emph{Erd{\H o}s--Rademacher problem}: determine 
\begin{align}\label{equ:Erdos-Rademacher-problem}
    %\text{Determine}\quad 
    g_{r}(n,e) := \min\Big\{N(K_r,G) \colon \text{$G$ is an $(n,e)$-graph}\Big\},
    % \quad\text{for} \quad r \ge 3, \tag{$*$}
\end{align}
 where an \emph{$(n,e)$-graph} means a graph with $n$ vertices and $e$ edges and
$N(K_r,G)$ denotes the number of $r$-cliques in~$G$. 

This problem
%Problem~\eqref{equ:Erdos-Rademacher-problem} 
has attracted a lot of attention and has been actively studied since it first appeared. 
Various results covering special ranges of $(n,e)$  were obtained (see e.g.~\cite{Goo59,Erd62,MM62,NS63,Bol76,LS76,Nik76,NK81,LS83,Fis89,GS00}) until Razborov~\cite{Raz08} determined the asymptotic value of $g_{3}(n,e)$ using flag algebras. %~\cite{Raz07}.
Later, using different methods, Nikiforov~\cite{Nik11} determined the asymptotic value of $g_r(n,e)$ for $r=4$ and  Reiher~\cite{Rei16} did this for all $r \ge 5$. For some further related results, we refer the reader to~\cite{Mub10,Mub13,PR17,KLPS20,XK21,LM22,BC23}.

Determining the exact value of $g_{r}(n,e)$ seems very challenging  due to  multiple (conjectured) extremal constructions. 
Given $n$ and $e$ in $\mathbb{N}:=\{1,2,\dots\}$ with $e \le \binom{n}{2}$, let 
    \begin{align}\label{equ:def-k}
        k = k(n,e) := \min\left\{s\in \mathbb{N} \colon t_{s}(n) \ge e\right\},
    \end{align}
    that is, $k$ is the smallest chromatic number that an $(n,e)$-graph can have.
Let $\mathcal{H}_{1}(n,e)$ (resp.\ $\mathcal{K}(n,e)$) denote the family of $(n,e)$-graphs that can be obtained from a complete $(k-1)$-partite (resp.\ complete multipartite) graph by adding a triangle-free graph into one part. 
%Let $\mathcal{K}(n,e)$ denote the family of $(n,e)$-graphs that can be obtained from a complete multipartite graph by adding  a triangle-free  graph into one part.
Note that the only difference between these two definitions is that we restrict the number of parts to $k-1$ when defining $\mathcal{H}_{1}(n,e)$; thus $\mathcal{H}_1(n,e) \subseteq \mathcal{K}(n,e)$.    
%
%Let $\mathcal{H}_{1}(n,e)$ be the family of $(n,e)$-graphs $H$ that can be obtained from a complete $(k-1)$-partite graph by adding a triangle-free graph into one part. 
%Let $\mathcal{K}(n,e)$ denote the family of $(n,e)$-graphs that can be obtained from a complete multipartite graph by adding  a triangle-free  graph into one part.
%Note that the latter definition does not restrict the number of parts to $k-1$ and thus $\mathcal{H}_1(n,e) \subseteq \mathcal{K}(n,e)$.
% $(n,e)$-graphs $H$ with a partition $V(H) = A_1 \cup \dots \cup A_{k-2} \cup B$ such that $|A_{1}| \ge \dots \ge |A_{k-2}|$, $H[A_1 \cup \dots \cup A_{k-2}] = K[A_1, \ldots, A_{k-2}]$, $H[B, V(H)\setminus B]$ is a complete bipartite graph, and $H[B]$ is triangle-free. 
%Lov\'{a}sz and Simonovits~\cite{LS76,LS83} 
Lov\'{a}sz and Simonovits~\cite{LS76} conjectured that for every integer $r\ge 3$ there exists $n_0$ such that, for all positive integers $n \ge n_0$ and $e\le \binom{n}{2}$, it holds that 
\begin{align}\label{equ:LS-conjecture}
    g_{r}(n,e)
    = \min\Big\{N(K_r, H) \colon H \in \mathcal{K}(n,e)\Big\},
\end{align}
 that is, at least one $g_r(n,e)$-extremal graph is in~$\mathcal{K}(n,e)$. Note that~\eqref{equ:LS-conjecture} trivially holds for $e\le t_{r-1}(n)$ when $g_r(n,e)=0$.

Erd{\H o}s in~\cite{Erdos55} (resp.\ \cite{Erd62}) showed that~\eqref{equ:LS-conjecture} is true for $r = 3$ when $e \le t_2(n) + 3$ (resp.\ $e\le t_2(n)+cn$ for some constant $c>0$). 
Lov\'{a}sz and Simonovits~\cite{LS76} (see also Nikiforov and Khadzhiivanov~\cite{NK81}) extended the result of Erd{\H o}s to all $e$ satisfying $e \le t_2(n) + \lfloor n/2 \rfloor$. 
Later, Lov\'{a}sz and Simonovits~\cite{LS83} proved~\eqref{equ:LS-conjecture} for $r \ge 3$ when $e/\binom{n}{2}$ lies in a small upper neighborhood of $1-1/m$ for some integer $m \ge r-1$. 
More recently, Liu, Pikhurko and Staden~\cite{LPS20} determined $g_{3}(n,e)$ for all positive integers $n$ when $e \le (1-o(1))\binom{n}{2}$. 
Determining the exact value of $g_{r}(n,e)$ for $r \ge 4$ is still wide open in general.  

%
% \begin{definition}\label{DEF:a}
Given  $n, e\in \mathbb N$ with $e \le \binom{n}{2}$, 
let $\bm{a}^{\ast}=\bm{a}^{\ast}(n,e) \in \mathbb{N}^{k}$ be the unique vector such that 
    \begin{align*}
         a_{k}^{\ast} := \min\left\{a\in \mathbb{N} \colon a(n-a) + t_{k-1}(n-a) \ge e\right\},  \\
         a_{1}^{\ast}+\dots+a_{k-1}^{\ast}  = n-a_{k}^{\ast},
                \quad\text{and}\quad 
        a_{1}^{\ast} \ge \dots \ge a_{k-1}^{\ast} \ge a_{1}^{\ast}-1, 
    \end{align*}
where $k = k(n,e)$ is as defined in~\eqref{equ:def-k}. Thus $a_k^{\ast}$ is the smallest possible part size that a $k$-partite $(n,e)$-graph can have.
Also, let 
        \begin{align*}
            m^{\ast}  = m^{\ast}(n,e) & := \sum_{\{i,j\}\in \binom{[k]}{2}}a_{i}^{\ast}a_{j}^{\ast} -e, 
            \quad\text{and}\quad \\
            h^{\ast}_{r}(n,e) & := \sum_{I\in \binom{[k]}{r}}\prod_{i\in I}a_i^{\ast} - m^{\ast}\cdot \sum_{I'\in \binom{[k-2]}{r-2}}\prod_{j\in I'}a_j^{\ast}, 
        \end{align*}
 where $[k]:=\{1,\dots,k\}$ and ${X\choose k}:=\{Y\subseteq X: |Y|=k\}$.
Let $T:= K[A_{1}^{\ast}, \ldots, A_{k}^{\ast}]$ be the complete $k$-partite graph with parts $A_1^{\ast},\dots,A_{k}^{\ast}$ where $|A_{i}^{\ast}| = a_{i}^{\ast}$ for $i\in [k]$.
Let $H^{\ast} = H^{\ast}(n,e)$ be the graph obtained from $T$  by removing an $m^{\ast}$-edge star whose centre lies in $A_{k}^{\ast}$ and whose leaves lie in $A_{k-1}^{\ast}$.
It is not hard to see (see e.g.\ the calculation in~\eqref{eq:m'Ineq}) that $0\le m^{\ast}\le a_{k-1}^{\ast}-a_{k}^{\ast}$, so the graph $H^{\ast}$ is well-defined.
%Here, $K[V_1, \ldots, V_{k}]$ denotes the complete $k$-partite graph with parts $V_1, \ldots, V_{k}$.  
Also, let $\mathcal{H}_{1}^{\ast}(n,e)$ be the family defined as follows:
If $m^{\ast} = 0$,  take all graphs obtained from $T$ by replacing, for some $i\in [k-1]$, the bipartite graph $T[A_{i}^{\ast}\cup A_{k}^{\ast}]$ with an arbitrary triangle-free graph with $a_{i}^{\ast}a_{k}^{\ast}$ edges. 
If $m^{\ast}>0$,  take all graphs obtained from $T$ by replacing $T[A_{k-1}^{\ast}\cup A_{k}^{\ast}]$ with an arbitrary triangle-free graph with $a_{k-1}^{\ast}a_{k}^{\ast} - m^{\ast}$ edges. 
Observe that $\mathcal{H}_1^{\ast}(n,e)
\subseteq \mathcal{H}_1(n,e)$ and every graph in $\mathcal{H}_{1}^{\ast}(n,e)$ has the same number of $r$-cliques (see Fact~\ref{FACT:formula-count-Kr-1-part}); also, the graph $H^*=H^*(n,e)$ is contained in~$\mathcal{H}_1^{\ast}(n,e)$.

Sharpening the Lov\'{a}sz--Simonovits Conjecture,  Pikhurko and Razborov~\cite[Conjecture~1.4]{PR17} conjectured that, for $r\ge 4$ and sufficiently large $n$, every $n$-vertex graph with $e\le \binom{n}{2}$ edges and that contains the minimum number of $K_r$ is  in  $\mathcal{K}(n,e)$.
%, that is, can be obtained from a complete multipartite graph by adding a triangle-free graph inside one part. 
However, we show here that this conjecture is false (see Theorem~\ref{THM:N(Kr,G)-H1} and Proposition~\ref{PROP:N(K_r,H)-H2-ast}) and present an amended version (see Conjecture~\ref{CONJ:amended-PR}) as follows.

First, we write explicitly the value of $g_{r}(n,e)$ predicted by the Lov\'{a}sz--Simonovits Conjecture. 
%We present a slightly stronger result where we minimise the number of $r$-cliques over the larger family $\mathcal{K}(n,e)\subseteq \mathcal{H}_1(n,e)$ for $r\ge 4$.
(We also refer the reader to \cite[Proposition~1.5]{LPS20} where similar results are proved for $r=3$.)

\begin{theorem}\label{THM:N(Kr,G)-H1}
    Suppose that $r,n,e\in\mathbb N$ satisfy $n \ge r \ge 3$ and $e \le \binom{n}{2}$. 
    Then 
    \begin{align}\label{eq:N(Kr,G)-H1}
        % \min\left\{N(K_r, G) \colon G\in \mathcal{H}_{1}(n,e)\right\} = h_{r}^{\ast}(n,e). 
        \min\Big\{N(K_r, G) \colon G\in \mathcal{K}(n,e)\Big\} = h_{r}^{\ast}(n,e).
    \end{align}
    Moreover, if $r\ge 4$ and $e> t_{r-1}(n)$, then 
    %Moreover, if $r\ge 4$ then 
    \begin{align}
        % \left\{G \in \mathcal{H}_{1}(n,e) \colon N(K_r, G) = h_{r}^{\ast}(n,e)\right\}
        % & =  \mathcal{H}_{1}^{\ast}(n,e).
        \Big\{G \in \mathcal{K}(n,e) \colon N(K_r, G) = h_{r}^{\ast}(n,e)\Big\}
        & =  \mathcal{H}_{1}^{\ast}(n,e). \label{eq:MinFamily}
     \end{align}
\end{theorem}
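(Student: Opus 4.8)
The plan is to attain $h_r^\ast$ by the single explicit graph for the upper bound, and then to turn the lower bound into a purely numerical optimization over integer partitions of $n$. For the upper bound I take $H^\ast\in\mathcal H_1^\ast(n,e)\subseteq\mathcal K(n,e)$: in $T=K[A_1^\ast,\dots,A_k^\ast]$ each removed star-edge $uv$ (with $u\in A_{k-1}^\ast$, $v\in A_k^\ast$) destroys exactly the $\sum_{I'\in\binom{[k-2]}{r-2}}\prod_{j\in I'}a_j^\ast$ cliques through it, and since all removed edges share the centre $v$ and a clique meets each part at most once, no clique is destroyed twice; hence $N(K_r,H^\ast)=h_r^\ast$ and $\min\{N(K_r,G):G\in\mathcal K(n,e)\}\le h_r^\ast$. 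For the matching lower bound I write a general $G\in\mathcal K(n,e)$ as a complete multipartite graph with parts $B_1,\dots,B_\ell$ (sizes $b_1,\dots,b_\ell$) together with a triangle-free $F$ inside one part $B_p$. Since $F$ is triangle-free, no $r$-clique uses three vertices of $B_p$, so splitting cliques by how many of their vertices lie in $B_p$ gives
\begin{align*}
N(K_r,G)=e_r(\mathbf b)+|E(F)|\cdot e_{r-2}(\mathbf b_{-p}),\qquad |E(F)|=e-e_2(\mathbf b),
\end{align*}
where $e_j$ is the $j$-th elementary symmetric polynomial and $\mathbf b_{-p}$ omits $b_p$. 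The decisive point is that this depends on $F$ only through $|E(F)|$, so everything reduces to minimising $\Phi(\mathbf b,p):=e_r(\mathbf b)+(e-e_2(\mathbf b))\,e_{r-2}(\mathbf b_{-p})$ over partitions $\mathbf b$ of $n$ and indices $p$, subject to $0\le e-e_2(\mathbf b)\le t_2(b_p)$.

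I would solve this optimization in three reductions. First, since $e-e_2(\mathbf b)\ge0$, for fixed $\mathbf b$ the minimiser takes $b_p$ maximal (this minimises $e_{r-2}(\mathbf b_{-p})$ and simultaneously maximises the feasibility slack $t_2(b_p)$), so the special part may be assumed largest. Second, every $G\in\mathcal K(n,e)$ is $(\ell+1)$-colourable while each $(n,e)$-graph needs at least $k$ colours, so $\ell\ge k-1$; I then show the minimum is attained at $\ell=k-1$ by comparing configurations with consecutive part-numbers. Third, with $\ell=k-1$ parts and the special part largest, a vertex-exchange between two non-special parts—whose net effect weighs the increase of $e_r$ against the decrease routed through the term $-e_{r-2}(\mathbf b_{-p})\,\Delta e_2(\mathbf b)$—shows the non-special sizes must be balanced. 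Combining these pins $\mathbf b$ to the multiset $\{a_1^\ast,\dots,a_{k-2}^\ast,\,a_{k-1}^\ast+a_k^\ast\}$ with the combined part special, where $\Phi=h_r^\ast$; this proves \eqref{eq:N(Kr,G)-H1}.

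For the characterisation \eqref{eq:MinFamily} I assume $r\ge4$ and $e>t_{r-1}(n)$. The latter is equivalent to $k\ge r$, which makes $e_{r-2}$ of the $k-2\ge r-2$ non-special parts and the other relevant polynomials strictly positive; consequently each inequality in the three reductions above becomes strict away from the claimed optimum, forcing the part sizes and the special part to be exactly those of $\mathcal H_1^\ast(n,e)$. Because $N(K_r,G)$ is insensitive to the internal structure of $F$ (only $|E(F)|$ matters), every triangle-free graph with the correct number of edges on the special part is admissible, which is precisely the family $\mathcal H_1^\ast(n,e)$.

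The main obstacle I anticipate is the second reduction, lowering the number of parts to $k-1$: unlike the balancing in the third step, this is not a local convexity move, since decreasing $\ell$ forces edges out of the multipartite skeleton and into $F$, interacting with the constraint $|E(F)|\le t_2(b_p)$ and with the boundary case $m^\ast=0$ (where the special part may be any $A_i^\ast\cup A_k^\ast$). Controlling the sign of the change of $\Phi$ in both the part-count comparison and the vertex-exchange, and verifying that strictness holds exactly off $\mathcal H_1^\ast$—which genuinely fails for $r=3$, cf.\ \cite{LPS20}—is where the real work lies.
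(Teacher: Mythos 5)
Your setup is sound and matches the paper's own starting point: the identity $N(K_r,G)=e_r(\mathbf b)+|E(F)|\cdot e_{r-2}(\mathbf b_{-p})$ is exactly the paper's Fact~\ref{FACT:formula-count-Kr-1-part} (and the insensitivity to the internal structure of $F$ is what ultimately makes the whole family $\mathcal{H}_1^{\ast}(n,e)$ extremal), and the upper bound via $H^{\ast}$ is fine. But the proposal has a genuine gap, and you name it yourself: the second reduction, showing that the minimum of $\Phi$ over $\ell$-part configurations with $\ell\ge k$ is at least the minimum over $(k-1)$-part configurations, is only announced (``comparing configurations with consecutive part-numbers''), never proved. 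This is the heart of the theorem, not a technicality. It is not a local move: you cannot merge two parts to decrease $\ell$, because the union of $F$ with a complete bipartite graph between parts is no longer triangle-free, so an $\ell$-part configuration does not embed into the $(\ell-1)$-part feasible set; and a direct comparison of the two constrained optima is nontrivial precisely because decreasing $\ell$ shifts weight into $|E(F)|$ against the ceiling $t_2(b_p)$. The paper resolves this by a double induction on $\ell+r$: remove a smallest non-special part $U_1$, apply the inductive hypothesis to $\hat G=G-U_1\in\mathcal{K}(\hat n,\hat e)$ simultaneously for $K_r$- and $K_{r-1}$-counts, replace $\hat G$ by $H^{\ast}(\hat n,\hat e)$, and use $\hat k\le k$ (Claim~\ref{CLAIM:hat-k<k}) to land in $\mathcal{H}_0(n,e)$, where Proposition~\ref{PROP:H0-min-H1-ast} finishes; the induction on $r$ is needed exactly for the degenerate case $\hat e\le t_{r-1}(\hat n)$, where $N(K_r,\hat H)=0$ and only the $K_{r-1}$-count carries information. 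Your proposal contains no substitute for this mechanism.

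Two further points. First, the third reduction and the strictness analysis for~\eqref{eq:MinFamily} are also left unexecuted: balancing the non-special parts by vertex exchange is not enough, since one must also pin down the size of the special part and the number of parts (this is the content of the paper's Claims~\ref{CLAIM:Lemma:a-3}--\ref{CLAIM:Lemma:a-5}, i.e.\ $c_1\le c_{t-1}+1$, $t=k$, and $(c_1,\dots,c_k)=\bm{a}^{\ast}$), and the tie cases (e.g.\ $m^{\ast}=0$, where several multisets $\mathbf b$ are optimal) must be tracked to recover all of $\mathcal{H}_1^{\ast}(n,e)$ and nothing more. Second, a minor but real error: a graph in $\mathcal{K}(n,e)$ need not be $(\ell+1)$-colourable, since triangle-free graphs are not in general bipartite; the correct route to $\ell\ge k-1$ is the edge count $e=e_2(\mathbf b)+|E(F)|\le e_2(\mathbf b)+t_2(b_p)\le t_{\ell+1}(n)$.
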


Note that, since $\mathcal{H}_1^{\ast}(n,e)\subseteq \mathcal{H}_1(n,e)$, Theorem~\ref{THM:N(Kr,G)-H1} remains true if we replace $\mathcal{K}(n,e)$ by $\mathcal{H}_1(n,e)$. In fact, the later version of the Lov\'asz--Simonovits Conjecture from~\cite{LS83} states that, for all sufficiently large $n\ge n_0(r)$, at least one $g_r(n,e)$-extremal graph is in $\mathcal{H}_1(n,e)$. By~\eqref{eq:N(Kr,G)-H1},
%Theorem~\ref{THM:N(Kr,G)-H1}, 
these two conjectures are equivalent. 
%The latter result for $r=3$ was proved in~\cite[Proposition~15]{LPS20}.
One should be able to show with some extra work that~\eqref{eq:MinFamily} also holds for $r=3$ (it is also implied by the results in~\cite{LPS20} that~\eqref{eq:MinFamily} holds for most $e$, given $n$). Since our main focus is the case $r\ge 4$, we do not pursue this strengthening here.
 
Given integers $n,e \in \mathbb{N}$ with $e\le \binom{n}{2}$, we define the family $\mathcal{H}_{2}^{\ast}(n,e)$ as follows (with $k, \bm{a}^{\ast}, m^{\ast}$ being as before).
Take those graphs in $\mathcal{H}_{1}^{\ast}(n,e)$ that are $k$-partite, along with the following family. 
Take disjoint sets $A_{1}, \ldots, A_{k}$ of sizes $a_{1}^{\ast}, \ldots, a_{k}^{\ast}$, respectively, and let $m:= m^{\ast}$. 
If $m^{\ast} = 0$ and $a_{1}^{\ast} \ge a_{k}^{\ast}+2$, then we also allow $\left(|A_{1}|, \ldots, |A_{k}|\right) = \left(a_{2}^{\ast}, \ldots, a_{k-1}^{\ast}, a_{1}^{\ast}-1, a_{k}^{\ast}+1\right)$ and let $m:=a_1^{\ast}-a_k^{\ast}-1$. 
Take all graphs obtained from $K[A_{1}, \ldots, A_{k}]$ by removing any $m$ edges, each connecting $B_i$ to $A_i$ for some $i\in I$, where $I:= \left\{i\in [k-1] \colon |A_{i}| = |A_{k-1}|\right\}$ and $\left\{B_i\colon i\in I\right\}$ are some pairwise disjoint subsets of $A_{k}$. Clearly,  every graph in $\mathcal{H}_{2}^{\ast}(n,e)$ is an $(n,e)$-graph.

\begin{proposition}\label{PROP:N(K_r,H)-H2-ast}
    Suppose that $n \ge r \ge 4$ and $t_{r-1}(n)< e \le \binom{n}{2}$ are integers. 
    Then 
    \begin{align*}
        N(K_r, G)  
        = h_{r}^{\ast}(n,e), \quad\text{for every\ } G\in \mathcal{H}_{2}^{\ast}(n,e). 
    \end{align*}
    Also, there are infinitely many pairs $(n,e) \in \mathbb{N}^2$ with $t_{r-1}(n) < e \le \binom{n}{2}$ such that $\mathcal{H}_{2}^{\ast}(n,e) \setminus \mathcal{H}_{1}^{\ast}(n,e) \neq \emptyset$. 
\end{proposition}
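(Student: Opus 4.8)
The plan is to split the proof into the identity $N(K_r,G)=h_r^{\ast}(n,e)$ for every $G\in\mathcal H_2^{\ast}(n,e)$ and the non-emptiness of $\mathcal H_2^{\ast}\setminus\mathcal H_1^{\ast}$. The $k$-partite members of $\mathcal H_1^{\ast}(n,e)$ that are thrown into $\mathcal H_2^{\ast}(n,e)$ are already handled, since by Fact~\ref{FACT:formula-count-Kr-1-part} every graph in $\mathcal H_1^{\ast}(n,e)$ has exactly $h_r^{\ast}(n,e)$ copies of $K_r$. So it remains to treat a graph $G=K[A_1,\dots,A_k]-E_0$ produced by the edge-removal construction, where $E_0$ is the set of $m$ deleted edges. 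I would first record three elementary facts about the complete $k$-partite host: every edge of $E_0$ has an endpoint in $A_k$; a deleted edge joining $B_i\subseteq A_k$ to $A_i$ lies in exactly $\sum_{J\in\binom{[k]\setminus\{i,k\}}{r-2}}\prod_{j\in J}|A_j|$ copies of $K_r$; and no copy of $K_r$ contains two edges of $E_0$ (two such edges would share the unique $A_k$-vertex of the clique, whence, as the $B_i$ are disjoint, their other endpoints lie in one common part $A_i$, forcing the edges to coincide). Hence $N(K_r,G)=\sum_{J\in\binom{[k]}{r}}\prod_{j\in J}|A_j|-\sum_{e\in E_0}(\text{copies through }e)$ with no double counting.

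The crux is that each deleted edge kills the \emph{same} number $D$ of copies. Since $i\in I$ forces $|A_i|=|A_{k-1}|$, deleting the value $|A_i|$ from the multiset of part sizes leaves the same multiset as deleting $|A_{k-1}|$, so $\sum_{J\in\binom{[k]\setminus\{i,k\}}{r-2}}\prod_{j\in J}|A_j|$ does not depend on the choice of $i\in I$; call this common value $D$. In the unshifted case (sizes $\bm a^{\ast}$, $m=m^{\ast}$) one gets $D=\sum_{I'\in\binom{[k-2]}{r-2}}\prod_{j\in I'}a_j^{\ast}$, so $N(K_r,G)=\sum_{J\in\binom{[k]}{r}}\prod_{j\in J}a_j^{\ast}-m^{\ast}D=h_r^{\ast}(n,e)$ straight from the definition. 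In the shifted case ($m^{\ast}=0$, sizes obtained from $\bm a^{\ast}$ by lowering $a_1^{\ast}$ by one and raising $a_k^{\ast}$ by one) I would compare the two complete $k$-partite counts through the elementary symmetric polynomials in the two altered coordinates: the sum $a_1^{\ast}+a_k^{\ast}$ is preserved, while the product increases by $(a_1^{\ast}-1)(a_k^{\ast}+1)-a_1^{\ast}a_k^{\ast}=a_1^{\ast}-a_k^{\ast}-1=m$, and this increment multiplies the $(r-2)$-th elementary symmetric polynomial of the unchanged sizes $a_2^{\ast},\dots,a_{k-1}^{\ast}$, which is precisely $D$. Thus the new host count equals $\sum_{J\in\binom{[k]}{r}}\prod_{j\in J}a_j^{\ast}+mD$, and subtracting the $mD$ killed cliques returns $\sum_{J\in\binom{[k]}{r}}\prod_{j\in J}a_j^{\ast}=h_r^{\ast}(n,e)$, using $m^{\ast}=0$.

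\textbf{Non-emptiness.} Here I would exhibit an explicit infinite family. Fix $k=r$ and take part sizes $\bm a^{\ast}=(a,\dots,a,a-1,b)$ (with $r-2$ parts equal to $a$), where $a\ge b+3$ and $b\ge 1$, and let $e$ be the number of edges of $K[\bm a^{\ast}]$ so that $m^{\ast}=0$; a routine verification confirms that $k(n,e)=r$, that $\bm a^{\ast}(n,e)$ is this vector, and that $t_{r-1}(n)<e$ for all large $a$. The shifted sizes are then $(a,\dots,a,a-1,a-1,b+1)$, so two parts attain the value $|A_{k-1}|=a-1$, giving $|I|=2$; as $m=a-b-1\ge 2$, I can delete one edge from $A_k$ to each of these two parts, producing $\tilde G\in\mathcal H_2^{\ast}(n,e)$. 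The main obstacle is to show $\tilde G\notin\mathcal H_1^{\ast}(n,e)$, and the cleanest route is through the complement. Every $G\in\mathcal H_1(n,e)\supseteq\mathcal H_1^{\ast}(n,e)$ is a complete $(k-1)$-partite graph with a triangle-free graph placed inside one part, so its complement is the vertex-disjoint union of its parts, in each of which the independence number is at most $2$ (a clique contributes $1$, and the complement of a triangle-free graph has no independent triple); consequently \emph{every connected component of the complement of an $\mathcal H_1$-graph has independence number at most $2$}. For $\tilde G$, however, the two deleted edges make $A_k$ adjacent in $\overline{\tilde G}$ to both size-$(a-1)$ parts, so these three parts form a single connected component of $\overline{\tilde G}$; and this component contains a triangle of $\tilde G$ (an independent triple of $\overline{\tilde G}$), because each vertex of $A_k$ is non-adjacent to at most $m<a-1$ vertices of the two parts, hence has a neighbour in each, and those two neighbours are joined in $\tilde G$. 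This contradicts the component bound, so $\tilde G\notin\mathcal H_1(n,e)$ and therefore $\tilde G\notin\mathcal H_1^{\ast}(n,e)$; letting $a\to\infty$ yields infinitely many pairs $(n,e)$ with $\mathcal H_2^{\ast}(n,e)\setminus\mathcal H_1^{\ast}(n,e)\neq\emptyset$.
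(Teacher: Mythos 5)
Your proof is correct, but it departs from the paper's argument in both halves, most substantially in the second. For the identity, your unshifted-case counting (each deleted edge kills the same number $D$ of host copies, and no $K_r$ contains two deleted edges because the $B_i$ are pairwise disjoint and a clique meets each part at most once) is exactly the paper's computation in~\eqref{equ:PROP-K-H}. For the shifted sizes, however, the paper avoids any new computation: it first relocates all $m$ removed edges so that they lie between $A_{k-1}$ and $A_k$ (justified by the same invariance calculation), and then observes that the resulting graph satisfies the $m^{\ast}=0$ branch of the definition of $\mathcal{H}_1^{\ast}(n,e)$ verbatim --- the union $A_{k-1}\cup A_k$ has $a_1^{\ast}+a_k^{\ast}$ vertices and the bipartite graph between them has $(a_1^{\ast}-1)(a_k^{\ast}+1)-m=a_1^{\ast}a_k^{\ast}$ edges --- so the count follows from Fact~\ref{FACT:k,a,m,H,h} and the observation that all graphs in $\mathcal{H}_1^{\ast}(n,e)$ have equally many $K_r$'s. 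Your elementary-symmetric-polynomial comparison proves the same thing self-containedly; the paper's reduction is slicker, yours needs no appeal to earlier results. For non-emptiness your construction is genuinely different: you use the shifted branch of the definition ($m^{\ast}=0$, part sizes $(a,\dots,a,a-1,b)$), whereas the paper stays in the unshifted branch, taking $m^{\ast}=m\ge 2$ with all $k-1$ large parts of equal size $2q$ and removing $m$ edges into $m$ distinct parts. A real bonus of your route is the exclusion argument: the invariant ``every connected component of the complement of a graph in $\mathcal{H}_1(n,e)$ has independence number at most $2$'' is a clean, representation-free certificate, and it rules out membership even in the larger family $\mathcal{H}_1(n,e)$, where the paper only asserts ``it is easy to see'' for $\mathcal{H}_1^{\ast}(n,e)$.

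Two small repairs are needed. First, a member of $\mathcal{H}_2^{\ast}(n,e)$ must have \emph{exactly} $m$ edges removed, so ``delete one edge from $A_k$ to each of these two parts'' should read ``remove $m$ edges with at least one going into each of the two parts of size $a-1$'' (or simply fix $a=b+3$, so that $m=2$); your triangle argument is unaffected, since it only uses that each vertex of $A_k$ loses at most $m<a-1$ neighbours in those parts. Second, the ``routine verification'' that $k(n,e)=r$, that $\bm{a}^{\ast}(n,e)=(a,\dots,a,a-1,b)$ and that $m^{\ast}=0$ should at least invoke Lemma~\ref{LEMMA:d}: your $e$ equals $f(b):=b(n-b)+t_{r-1}(n-b)$ exactly, and the lemma (applicable since $n\ge br$) gives $f(b-1)<f(b)$, hence $a_k^{\ast}=b$ and $m^{\ast}=0$; the bound $e>t_{r-1}(n)$ then follows because adding the $b$ small-part vertices to $T_{r-1}(n-b)$ gains fewer than $b(n-b)$ edges. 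This mirrors the verification the paper itself carries out for its own example, so it is indeed routine, but it should not be omitted entirely.
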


We propose the following amended conjecture.

\begin{conjecture}\label{CONJ:amended-PR}
    Let $r \ge 4$ be fixed. For every sufficiently large integer $n$ and every integer $e$ with $t_{r-1}(n)< e \le \binom{n}{2}$, it holds that 
    \begin{align*}
        \Big\{G\colon \text{$G$ is an $(n,e)$-graph  with $N(K_r, G) = g_{r}(n,e)$} \Big\}
        = \mathcal{H}_{1}^{\ast}(n,e) \cup \mathcal{H}_{2}^{\ast}(n,e). 
    \end{align*}
\end{conjecture}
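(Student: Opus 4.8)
The plan is to prove both assertions of Theorem~\ref{THM:N(Kr,G)-H1} by reducing the question to a purely numerical optimisation over the ``profiles'' of a graph in $\mathcal{K}(n,e)$ and then solving that optimisation exactly, with enough precision to pin down the extremal set. Write $G\in\mathcal{K}(n,e)$ as a complete multipartite graph with parts $B_1,\dots,B_p$ of sizes $b_1,\dots,b_p$ together with a triangle-free graph $F$ added inside one part, the \emph{host}, and set $f:=|E(F)|$. The first step is the counting identity of Fact~\ref{FACT:formula-count-Kr-1-part}: since $F$ is triangle-free, every $K_r$ uses at most two host-vertices, and if it uses two they span an edge of $F$; hence, taking $B_1$ as host and writing $\sigma_j$ for the $j$-th elementary symmetric polynomial,
\begin{align*}
N(K_r,G)=\sigma_r(b_1,\dots,b_p)+f\cdot\sigma_{r-2}(b_2,\dots,b_p),\qquad f=e-\sigma_2(b_1,\dots,b_p).
\end{align*}
The key consequence is that $N(K_r,G)$ depends only on the \emph{profile} $(b_1,\dots,b_p;\,\mathrm{host})$ and not on the internal structure of $F$. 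This immediately gives the upper bound in~\eqref{eq:N(Kr,G)-H1}, since a direct count shows $N(K_r,H^\ast)=h_r^\ast$ (removing an $m^\ast$-edge star centred in $A_k^\ast$ destroys exactly $m^\ast\,\sigma_{r-2}(a_1^\ast,\dots,a_{k-2}^\ast)$ cliques, these being disjoint because each uses the unique star-centre together with a distinct leaf in $A_{k-1}^\ast$); and it also explains the ``arbitrary triangle-free graph'' freedom in the definition of $\mathcal{H}_1^\ast$, so that once the optimal profile is identified, the entire extremal set will be exactly the graphs realising it.

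It then remains to minimise $\Phi(\bm b):=\sigma_r(\bm b)+(e-\sigma_2(\bm b))\,\sigma_{r-2}(b_2,\dots,b_p)$ over integer partitions $\bm b=(b_1,\dots,b_p)$ of $n$ with $0\le e-\sigma_2(\bm b)\le\lfloor b_1^2/4\rfloor$ (so a triangle-free $F$ with the required $f$ edges exists in the host), and over the choice of host. I would first observe that, for fixed sizes, the host should be a largest part, since that minimises the factor $\sigma_{r-2}(b_2,\dots,b_p)$ multiplying $f\ge0$. Next I would pin down the optimal number of parts as $p=k-1$ and the optimal base partition as $\bm a^\ast$, with the edge deficiency $m^\ast=\sigma_2(\bm a^\ast)-e$ concentrated between the two smallest parts $A_{k-1}^\ast,A_k^\ast$; here the bound $0\le m^\ast\le a_{k-1}^\ast-a_k^\ast$ recorded in~\eqref{eq:m'Ineq} guarantees realisability as $H^\ast$. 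The engine is a family of local moves --- transferring one vertex between two non-host parts, or between the host and a smallest part, or merging two parts while raising $f$ to compensate --- each of which changes $\Phi$ by an explicitly computable second difference. One shows each move weakly increases $\Phi$ away from the $\bm a^\ast$-profile, using that $\sigma_2$ is Schur-concave (balancing the parts maximises cross-edges, hence minimises the forced within-host count $f$) while the transversal term $\sigma_r$ pulls in the opposite direction; reconciling these competing monotonicities is what forces the precise balanced-plus-defect shape of $\bm a^\ast$ and establishes the lower bound in~\eqref{eq:N(Kr,G)-H1}.

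For the essential converse --- the inclusion ``$\subseteq$'' in~\eqref{eq:MinFamily}, that \emph{every} $G\in\mathcal{K}(n,e)$ with $N(K_r,G)=h_r^\ast$ lies in $\mathcal{H}_1^\ast$ --- I would upgrade every inequality of the previous paragraph to a strict one outside the extremal profiles, showing the minimum is attained only at the $\bm a^\ast$-profile (together, when $m^\ast=0$, with the symmetric variants in which $A_k^\ast$ is merged with an arbitrary $A_i^\ast$, $i\in[k-1]$, all of which are listed in $\mathcal{H}_1^\ast$). This is exactly where $r\ge4$ enters: the edge-clique term carries $\sigma_{r-2}$ with $r-2\ge2$, whose strict behaviour under the relevant moves yields strict inequalities; for $r=3$ this factor degenerates to the linear $\sigma_1=\sum_j b_j=n-b_1$, the second differences collapse, genuine ties appear, and the family statement would fail as written, which is precisely why~\eqref{eq:MinFamily} is asserted only for $r\ge4$. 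Having forced the minimiser to the $\bm a^\ast$-profile, I invoke the profile-independence from the first step: any $G$ realising that profile is obtained from $T$ by replacing the host bipartite graph with an arbitrary triangle-free graph of the prescribed size, i.e.\ $G\in\mathcal{H}_1^\ast$; the reverse inclusion ``$\supseteq$'' is immediate from $N(K_r,H)=h_r^\ast$ for all $H\in\mathcal{H}_1^\ast$.

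The main obstacle I anticipate is the combined discrete optimisation, and in particular certifying the strictness required for the converse. The objective $\Phi$ is monotone in no single parameter --- raising $f$ buys edges but adds $\sigma_{r-2}$-many cliques per edge --- so the optimum sits at the boundary where the within-host edges exactly absorb the deficit $m^\ast$, and one must simultaneously rule out all competitors (unbalanced bases, a non-largest host, $p\ne k-1$, or the deficiency split across several parts). Controlling the sign of the second differences uniformly over the feasible region, and cleanly separating the tied $r=3$ behaviour from the strict $r\ge4$ behaviour, is the delicate technical heart; it is exactly the strictness for $r\ge4$ that makes $\mathcal{H}_1^\ast$ the \emph{entire} minimiser family inside $\mathcal{K}(n,e)$.
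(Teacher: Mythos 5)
You have written a proof sketch for the wrong statement. What you were asked about is Conjecture~\ref{CONJ:amended-PR}, which the paper does not prove at all: it is posed as an open problem sharpening the Lov\'asz--Simonovits conjecture, and the paper's own results (Theorem~\ref{THM:N(Kr,G)-H1} and Proposition~\ref{PROP:N(K_r,H)-H2-ast}) only serve to justify its plausibility. Your entire argument --- the identity $N(K_r,G)=\sigma_r(b_1,\dots,b_p)+f\cdot\sigma_{r-2}(b_2,\dots,b_p)$, the reduction to a profile optimisation, and the local moves with their second differences --- presupposes that $G$ is a complete multipartite graph with a triangle-free graph added inside one part, i.e.\ $G\in\mathcal{K}(n,e)$. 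The conjecture quantifies over \emph{all} $(n,e)$-graphs, and a general $(n,e)$-graph has no such profile: each $K_r$ need not use at most one ``added'' edge, so the counting identity fails and no move within the structured family can lower-bound $N(K_r,G)$ for arbitrary $G$. Bridging that gap is exactly the open content of the conjecture --- it already presupposes $g_r(n,e)=h_r^{\ast}(n,e)$ for large $n$, which is the Lov\'asz--Simonovits conjecture, currently known for $r\ge 4$ only asymptotically and in special ranges of $e$. What your sketch actually targets is Theorem~\ref{THM:N(Kr,G)-H1} (which the paper proves differently, via the Steps~1--4 modifications, Proposition~\ref{PROP:H0-min-H1-ast}, and induction on the number of parts), and even as such it is an outline rather than a proof.

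Moreover, your conclusion, read as an answer to the conjecture, is provably false. You claim the extremal family is exactly $\mathcal{H}_1^{\ast}(n,e)$, but Proposition~\ref{PROP:N(K_r,H)-H2-ast} shows that every graph in $\mathcal{H}_2^{\ast}(n,e)$ also attains $N(K_r,G)=h_r^{\ast}(n,e)$, and that $\mathcal{H}_2^{\ast}(n,e)\setminus\mathcal{H}_1^{\ast}(n,e)\neq\emptyset$ for infinitely many pairs $(n,e)$. These additional extremal graphs remove edges between $A_k$ and \emph{several} other parts $A_i$, so they do not lie in $\mathcal{K}(n,e)$: they cannot be written as a complete multipartite graph plus a triangle-free graph in a single part, because merging the parts touched by the missing edges leaves triangles inside the merged part. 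Your within-$\mathcal{K}(n,e)$ strictness analysis therefore never sees them --- which is precisely the phenomenon that falsified the original Pikhurko--Razborov conjecture and forced the amended right-hand side $\mathcal{H}_1^{\ast}(n,e)\cup\mathcal{H}_2^{\ast}(n,e)$. So even conditionally on Lov\'asz--Simonovits, an argument confined to $\mathcal{K}(n,e)$ cannot establish the asserted equality of families.
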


For comparison with the case $r=3$, the exact result of Liu, Pikhurko and Staden~\cite{LPS20} valid for $e\le (1-o(1))\binom{n}{2}$ states that the set of $g_3(n,e)$-extremal graphs is exactly  $\mathcal{H}_{0}^{\ast}(n,e)\cup \mathcal{H}_{2}^{\ast}(n,e)$ for a certain explicit family $\mathcal{H}_{0}^{\ast}(n,e)\supseteq \mathcal{H}_{1}^{\ast}(n,e)$, where the inclusion is strict for infinitely many pairs $(n,e)$.  However, for $r\ge 4$ and $e>t_{r-1}(n)$, every graph in $\mathcal{H}_{0}^{\ast}(n,e)\setminus \mathcal{H}_{1}^{\ast}(n,e)$ can be shown to have more $K_r$'s than $H^{\ast}(n,e)$. (Basically, each such graph is obtained from a complete $(k-1)$-partite graph by adding edges into more than one part and cannot minimise the number of $K_r$'s for $r\ge 4$ by Lemma~\ref{LEMMA:one-partially-full-part}.)  
%(Also, Proposition~1.5 in~\cite{LPS20} shows that, for every $n\ge 1$ and $1\le e\le \binom{n}{2}$ the first part of Theorem~\ref{THM:N(Kr,G)-H1} is still true but we have to take the generally larger family $\mathcal{H}_0^{\ast}(n,e)\cup \mathcal{H}_{2}^{\ast}(n,e)$  to capture all $K_3$-minimum graphs in~$\mathcal{K}(n,e)$.) 

For the purposes of this paper (namely for
Proposition~\ref{PROP:N(K_r,H)-H2-ast}), only the difference $\mathcal{H}_{2}^{\ast}(n,e)\setminus \mathcal{H}_{1}^{\ast}(n,e)$ matters; we use the current definitions merely so that the families $\mathcal{H}_i^{\ast}(n,e)$ and $\mathcal{H}_i(n,e)$ are the same as in~\cite{LPS20}.
%Thus our conjecture states in an equivalent form (using some of the above facts,, that $K_r$-minimum graphs

The rest of the paper of organised as follows. In the next section, we present some definitions and preliminary results. As a step towards proving Theorem~\ref{THM:N(Kr,G)-H1}, we first find extremal graphs in a certain family $\mathcal{H}_0(n,e)$ in Section~\ref{se:H0} (see Proposition~\ref{PROP:H0-min-H1-ast} for the exact statement). We derive  Theorem~\ref{THM:N(Kr,G)-H1} in Section~\ref{SUBSEC:proof-Thm}. The proof of Proposition~\ref{PROP:N(K_r,H)-H2-ast} is presented in Section~\ref{SUBSEC:proof-Prop}.

\section{Preliminaries}\label{SEC:prelim}
Given $\ell$ pairwise disjoint sets $A_1, \ldots, A_{\ell}$, we use $K[A_1, \ldots, A_{\ell}]$ to denote the complete ${\ell}$-partite graph with parts $A_1, \ldots, A_{\ell}$; if we care only about the isomorphism type of this graph (i.e.\ only the sizes of the parts matter), we may instead write $K_{a_1, \ldots, a_{\ell}}$, where $a_i:=|A_i|$ for $i\in [\ell]$.
%For integers $a_1, \ldots, a_{\ell} \in \mathbb{N}$, we use $K_{a_1, \ldots, a_{\ell}}$ to denote a typical complete $k$-partite graph $K[V_1, \ldots, V_{\ell}]$ with $|V_i| = a_i$ for $i\in [\ell]$.

Let $G=(V,E)$ be a graph. By
 $|G|$ we denote the number of edges in~$G$.  Let $\overline{G} := \left(V,\binom{V}{2}\setminus E\right)$ denote the \emph{complement} of $G$. 
The subgraph of $G$ \emph{induced} by a set $A\subseteq V$ is  $G[A]:=\left(A,\binom{A}{2}\cap E\right)$. For disjoint $A,B\subseteq V$, we use $G[A,B]$ to denote the induced bipartite graph with parts $A$ and $B$ (which consists of edges connecting $A$ to $B$). 
%Given pairwise disjoint sets $S_1, \ldots, S_{\ell} \subseteq V(G)$, we use $G[S_1, \ldots, S_{\ell}]$ to denote the induced $\ell$-partite graph with parts $S_1, \ldots, S_{\ell}$. 
%In other words, $G[S_1, \ldots, S_{\ell}]$ consists of all edges in $G[S_1 \cup \dots \cup S_{\ell}]$ that contain at most one vertex from each $S_i$. 

In the remainder of this note, we assume  unless it is stated otherwise that $r,n,e\in\mathbb N$ satisfy $r\ge 3$ and $e\le \binom{n}{2}$ (and we minimise the number of $r$-cliques over $(n,e)$-graphs). Also,   $k=k(n,e)$ is defined in~\eqref{equ:def-k}. 
% Recall that we may assume that $e$ is at least $t_{r-1}(n)+1$, hence, $k=k(n,e)$ defined as in Section~\ref{SEC:introduction} is at least $r$. We will use this fact in the remainder of this note without further specification.

Given a family $\mathcal{F}$ of $(n,e)$-graphs, we use $\mathcal{F}^{\min}$ to denote the collection of graphs $F \in \mathcal{F}$ with the minimum number of $K_r$'s (over all graphs in $\mathcal{F}$).  
For convenience, we set $N(K_0, G) := 1$ and $N(K_{-1}, G) := 0$ for all graphs $G$. 

Let the family $\mathcal{H}_{0}(n,e)$ be the collection of all $(n,e)$-graphs that can be obtained from an $n$-vertex complete $(k-1)$-partite graph by adding a (possibly empty) triangle-free graph into each part. 
It is clear from the definition that $\mathcal{H}_1(n,e) \subseteq \mathcal{H}_{0}(n,e)$.

The following fact follows from some simple calculations (with the argument for Part~(i) being the same as in~\eqref{eq:m'Ineq}). 
\begin{fact}\label{FACT:k,a,m,H,h}
    Let $k, \bm{a}^{\ast}, m^{\ast}, H^{\ast}$, and $h_r^{\ast}(n,e)$ be as defined in Section~\ref{SEC:introduction}. 
    Then it holds for all $r \ge 3$ that
    \begin{enumerate}[label=(\roman*)]
        \item $0 \le m^{\ast} \le a_{k-1}^{\ast} - a_{k}^{\ast}$, 
        \item $|K_{a_{1}^{\ast}, \ldots, a_{k}^{\ast}}| - |K_{a_{1}^{\ast}, \ldots, a_{k-2}^{\ast}, a_{k-1}^{\ast}+1, a_{k}^{\ast}-1}|  = a_{k-1}^{\ast} - a_{k}^{\ast}+1$, 
        \item $N(K_r, H^{\ast}) = h^{\ast}_{r}(n,e) \ge g_r(n,e)$. 
    \end{enumerate}
\end{fact}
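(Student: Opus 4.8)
The plan is to prove the three parts in order, treating (i) and (ii) as short edge-count computations and reserving the genuinely combinatorial work for (iii). The common engine for (i) and (ii) is the effect on $|K_{a_1,\dots,a_k}|$ of shifting a single vertex between the last two parts.

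For part (ii) I would compute the two edge counts directly. The graphs $K_{a_{1}^{\ast},\dots,a_{k}^{\ast}}$ and $K_{a_{1}^{\ast},\dots,a_{k-2}^{\ast},a_{k-1}^{\ast}+1,a_{k}^{\ast}-1}$ differ only by moving one vertex from part $k$ into part $k-1$, so only edges incident to parts $k-1$ and $k$ change. Writing $s:=a_{1}^{\ast}+\dots+a_{k-2}^{\ast}$ for the size of the untouched parts, the edges joining $\{1,\dots,k-2\}$ to the last two parts contribute $-s$ (from part $k-1$) and $+s$ (from part $k$), which cancel, while the edge count between parts $k-1$ and $k$ drops from $a_{k-1}^{\ast}a_{k}^{\ast}$ to $(a_{k-1}^{\ast}+1)(a_{k}^{\ast}-1)$, a decrease of exactly $a_{k-1}^{\ast}-a_{k}^{\ast}+1$. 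This is (ii).

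For part (i), the bound $m^{\ast}\ge 0$ is immediate: by the definition of $a_{k}^{\ast}$ we have $a_{k}^{\ast}(n-a_{k}^{\ast})+t_{k-1}(n-a_{k}^{\ast})\ge e$, and the left-hand side equals $|K_{a_{1}^{\ast},\dots,a_{k}^{\ast}}|=\sum_{\{i,j\}}a_{i}^{\ast}a_{j}^{\ast}$ (the first $k-1$ parts being balanced on $n-a_{k}^{\ast}$ vertices), so $e\le\sum_{\{i,j\}}a_{i}^{\ast}a_{j}^{\ast}$. For the upper bound I would use minimality of $a_{k}^{\ast}$: the value $a_{k}^{\ast}-1$ fails the defining inequality, so $(a_{k}^{\ast}-1)(n-a_{k}^{\ast}+1)+t_{k-1}(n-a_{k}^{\ast}+1)<e$. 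The same single-vertex-shift computation as in (ii)—this is exactly~\eqref{eq:m'Ineq}—shows this left-hand side equals $\sum_{\{i,j\}}a_{i}^{\ast}a_{j}^{\ast}-(a_{k-1}^{\ast}-a_{k}^{\ast}+1)$, whence $m^{\ast}<a_{k-1}^{\ast}-a_{k}^{\ast}+1$ and so $m^{\ast}\le a_{k-1}^{\ast}-a_{k}^{\ast}$ by integrality. The one point needing care is the boundary case $a_{k}^{\ast}=1$, where $a_{k}^{\ast}-1=0\notin\mathbb N$ and the minimality step is vacuous; here I would instead invoke $t_{k-1}(n)<e$ from~\eqref{equ:def-k} to recover the bound directly.

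For part (iii) I would first record that $H^{\ast}$ is a well-defined $(n,e)$-graph: by (i) we have $m^{\ast}\le a_{k-1}^{\ast}-a_{k}^{\ast}\le a_{k-1}^{\ast}$, so a star with centre in $A_{k}^{\ast}$ and $m^{\ast}$ distinct leaves in $A_{k-1}^{\ast}$ exists, and deleting it from $T$ leaves $\sum_{\{i,j\}}a_{i}^{\ast}a_{j}^{\ast}-m^{\ast}=e$ edges. To count $r$-cliques I start from $N(K_r,T)=\sum_{I\in\binom{[k]}{r}}\prod_{i\in I}a_{i}^{\ast}$ (one vertex from each of $r$ chosen parts) and subtract the cliques destroyed by the removal. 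A clique of $T$ is destroyed precisely when it uses one of the $m^{\ast}$ deleted edges; the key observation is that no $r$-clique uses two of them, since all deleted edges share the centre $v\in A_{k}^{\ast}$ and their leaves lie in the single part $A_{k-1}^{\ast}$, so two leaves would form a non-adjacent pair. Hence the destroyed cliques are counted without overlap, each deleted edge $\{v,w\}$ lying in exactly $\sum_{I'\in\binom{[k-2]}{r-2}}\prod_{j\in I'}a_{j}^{\ast}$ cliques (extend $\{v,w\}$ by one vertex from each of $r-2$ of the parts $1,\dots,k-2$), which yields $N(K_r,H^{\ast})=h_{r}^{\ast}(n,e)$. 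Finally, since $H^{\ast}$ is an $(n,e)$-graph, $h_{r}^{\ast}(n,e)=N(K_r,H^{\ast})\ge g_r(n,e)$ by the definition in~\eqref{equ:Erdos-Rademacher-problem}. The main obstacle is precisely this non-overlap step: establishing that simple subtraction (rather than full inclusion–exclusion) computes the exact count; the only other delicate point is the $a_{k}^{\ast}=1$ boundary case in part (i).
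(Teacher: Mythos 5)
Your proposal is correct and takes essentially the same route the paper intends: the paper dismisses Fact~\ref{FACT:k,a,m,H,h} as following from ``simple calculations,'' pointing for part~(i) to the single-vertex-shift estimate in~\eqref{eq:m'Ineq}, which is exactly your minimality-of-$a_{k}^{\ast}$ argument combined with the edge-count difference of part~(ii). Your additional care over the $a_{k}^{\ast}=1$ boundary case and over the non-overlap of deleted-star edges in the clique count for part~(iii) simply makes those routine computations explicit, and both steps check out.
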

We also need the following simple facts for counting $r$-cliques in some special classes of graphs. 
\begin{fact}\label{FACT:formula-count-Kr-1-part}
    Let $G$ be a graph, $S \subseteq V(G)$ be a vertex set, and $\overline{S}:= V(G)\setminus S$. 
    Suppose that the induced subgraph $G[S]$ is triangle-free, and the induced bipartite graph $G[S,\overline{S}]$ is complete. 
    Then 
    \begin{align*}
        N(K_r, G)
        = |G[S]|\cdot N(K_{r-2}, G[\overline{S}]) + |S|\cdot N(K_{r-1}, G[\overline{S}]) + N(K_{r}, G[\overline{S}]). 
    \end{align*}
\end{fact}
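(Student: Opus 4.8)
The plan is to prove Fact~\ref{FACT:formula-count-Kr-1-part} by a direct counting argument that partitions the $r$-cliques of $G$ according to how many of their vertices lie in the triangle-free set $S$. Since $G[S]$ is triangle-free, any clique in $G$ can contain at most $2$ vertices from $S$; this is the structural observation that drives the entire computation. Hence every copy of $K_r$ in $G$ falls into exactly one of three cases: it has $0$, $1$, or $2$ of its vertices in $S$, with the remaining $r$, $r-1$, or $r-2$ vertices (respectively) lying in $\overline{S}$.

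First I would formalise the claim that a clique meets $S$ in at most two vertices: if three vertices of $S$ were mutually adjacent, they would form a triangle inside $G[S]$, contradicting the hypothesis that $G[S]$ is triangle-free. Next I would count each of the three cases separately, crucially using that the bipartite graph $G[S,\overline{S}]$ is complete, so that the adjacencies between the $S$-part and the $\overline{S}$-part of a candidate clique are automatically present and impose no constraint. The case of $0$ vertices in $S$ contributes exactly $N(K_r,G[\overline{S}])$, namely the $r$-cliques living entirely in $\overline{S}$. The case of exactly $1$ vertex $v\in S$ requires the other $r-1$ vertices to form a clique in $G[\overline{S}]$ (and, by completeness of $G[S,\overline{S}]$, $v$ is joined to all of them); summing over the $|S|$ choices of $v$ gives $|S|\cdot N(K_{r-1},G[\overline{S}])$. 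The case of exactly $2$ vertices in $S$ requires those two vertices to be adjacent, i.e.\ to form an edge of $G[S]$, together with an $(r-2)$-clique in $G[\overline{S}]$; this yields $|G[S]|\cdot N(K_{r-2},G[\overline{S}])$.

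Adding the three contributions gives precisely
\begin{align*}
    N(K_r, G)
    = |G[S]|\cdot N(K_{r-2}, G[\overline{S}]) + |S|\cdot N(K_{r-1}, G[\overline{S}]) + N(K_{r}, G[\overline{S}]),
\end{align*}
as claimed. The convention $N(K_0,G):=1$ and $N(K_{-1},G):=0$ fixed earlier ensures the formula remains valid at the boundary values $r=2$ and $r=1$ (for instance, when $r=2$ the first term counts the edges of $G[S]$ via $N(K_0,G[\overline{S}])=1$), though for the stated range $r\ge 3$ no degenerate terms arise.

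I do not expect any genuine obstacle here, since the argument is a clean case split with no optimisation or asymptotic estimation involved. The only point demanding care is the bookkeeping: one must verify that the three cases are mutually exclusive and exhaustive (guaranteed by the ``at most two vertices in $S$'' observation) and that completeness of $G[S,\overline{S}]$ is invoked in exactly the right place so that no cross-edges need to be checked. This is why I would state the triangle-free consequence as an explicit first step before counting, to make the exhaustiveness of the trichotomy transparent.
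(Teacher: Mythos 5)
Your proof is correct and is exactly the direct counting argument the paper has in mind: the paper states this fact without proof as an immediate consequence of classifying $r$-cliques by their intersection with $S$ (at most two vertices, by triangle-freeness) and using completeness of $G[S,\overline{S}]$. Nothing to add.
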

\begin{fact}\label{FACT:formula-count-Kr-2-parts}
    Suppose that $G$ is a graph obtained from $K[V_1, \ldots, V_{\ell}]$ by adding a triangle-free graph. Let $S:= V_1 \cup V_2$ and $\overline{S} := V(G)\setminus S$. 
    Then 
    \begin{align*}
        N(K_{r}, G)
        \ =\ &  |G[V_1]|\cdot |G[V_2]|\cdot N(K_{r-4}, G[\overline{S}])  \\
          &  + \left(|G[V_1]|\cdot |V_2| + |G[V_2]|\cdot |V_1|\right) \cdot N(K_{r-3}, G[\overline{S}]) \\
          &  + |G[S]|\cdot N(K_{r-2}, G[\overline{S}]) 
          + |S|\cdot N(K_{r-1}, G[\overline{S}])  
             + N(K_{r}, G[\overline{S}]). 
    \end{align*}
\end{fact}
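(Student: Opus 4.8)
The plan is to partition the $r$-cliques of $G$ according to how they meet the three regions $V_1$, $V_2$, and $\overline{S}=V_3\cup\dots\cup V_\ell$, and to read off the claimed identity as the corresponding sum of products. The whole argument rests on one structural observation: since $G$ is obtained from the complete $\ell$-partite graph $K[V_1,\dots,V_\ell]$ by adding edges that form a triangle-free graph, any two vertices in distinct parts are adjacent, while the edges inside each $V_i$ form the triangle-free graph $G[V_i]$. Consequently a set $K\subseteq V(G)$ is a clique \emph{if and only if} each of $K\cap V_1$, $K\cap V_2$, $K\cap\overline{S}$ is a clique of $G[V_1]$, $G[V_2]$, $G[\overline{S}]$, respectively: for a cross-part pair the edge is automatic, and within a part cliqueness is exactly the stated condition. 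In particular, triangle-freeness of $G[V_1]$ and $G[V_2]$ forces $|K\cap V_1|\le 2$ and $|K\cap V_2|\le 2$.

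First I would use this bijective decomposition to write
\[
N(K_r,G)=\sum_{i=0}^{2}\sum_{j=0}^{2} N(K_i,G[V_1])\cdot N(K_j,G[V_2])\cdot N(K_{r-i-j},G[\overline{S}]),
\]
the point being that choosing an $r$-clique of $G$ amounts to independently choosing a clique of size $i$ in $G[V_1]$, a clique of size $j$ in $G[V_2]$, and a clique of size $r-i-j$ in $G[\overline{S}]$, and every such triple reassembles into a genuine $r$-clique. The conventions $N(K_0,\cdot)=1$ and $N(K_{-1},\cdot)=0$ handle the boundary terms and small $r$ (for $r=3$ the term with $r-i-j=-1$ vanishes). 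Evaluating the nine summands with $N(K_0,G[V_t])=1$, $N(K_1,G[V_t])=|V_t|$, and $N(K_2,G[V_t])=|G[V_t]|$ for $t\in\{1,2\}$ is then immediate.

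Finally I would group the nine summands by the value $r-i-j$, i.e.\ by the size of the clique taken in $G[\overline{S}]$. The coefficient of $N(K_{r-1},G[\overline{S}])$ is $|V_1|+|V_2|=|S|$, and the coefficient of $N(K_{r-2},G[\overline{S}])$ is $|G[V_1]|+|G[V_2]|+|V_1|\cdot|V_2|$, which equals $|G[S]|$ because $G[S]$ is the disjoint union of the edges inside $V_1$, the edges inside $V_2$, and the complete bipartite graph between $V_1$ and $V_2$, the last contributing $|V_1|\cdot|V_2|$ edges. The remaining coefficients $|G[V_1]|\cdot|G[V_2]|$, $|G[V_1]|\cdot|V_2|+|G[V_2]|\cdot|V_1|$, and $1$ are read off directly, yielding precisely the stated formula.

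An equivalent and perhaps slicker route is to apply Fact~\ref{FACT:formula-count-Kr-1-part} twice: first with $S=V_1$ (valid since $G[V_1]$ is triangle-free and $G[V_1,V(G)\setminus V_1]$ is complete) to peel off $V_1$, then again inside $G[V(G)\setminus V_1]$ with the part $V_2$, and substitute the three resulting expressions for $N(K_r,\cdot)$, $N(K_{r-1},\cdot)$, $N(K_{r-2},\cdot)$. I do not expect a genuine obstacle here; the only care needed is the bookkeeping when collecting coefficients together with the identity $|G[S]|=|G[V_1]|+|G[V_2]|+|V_1|\cdot|V_2|$, and a consistent use of the conventions for $N(K_0,\cdot)$ and $N(K_{-1},\cdot)$ so that the formula stays correct for small~$r$.
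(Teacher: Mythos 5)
Your proof is correct: the decomposition of an $r$-clique by its intersections with $V_1$, $V_2$, and $\overline{S}$, together with the bound $|K\cap V_t|\le 2$ forced by triangle-freeness and the identity $|G[S]|=|G[V_1]|+|G[V_2]|+|V_1|\cdot|V_2|$, is exactly the routine calculation the paper has in mind — it states this as a Fact without proof precisely because the argument is this straightforward. Both of your routes (the direct nine-term sum and the double application of Fact~\ref{FACT:formula-count-Kr-1-part}) are valid and handle the small-$r$ boundary cases correctly via the conventions $N(K_0,\cdot)=1$ and $N(K_{-1},\cdot)=0$.
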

\hide{
\begin{fact}\label{FACT:formula-count-Kr-3-parts}
    Let $G$ be a graph, $S \subseteq V(G)$, and $\overline{S} := V(G)\setminus S$. 
    Suppose that the induced subgraph $G[S]$ is $3$-partite, and the induced bipartite subgraph $G[S, \overline{S}]$ is complete. 
    Then 
    \begin{align*}
        N(K_{r}, G)
        = N(K_3, G[S]) \cdot N(K_{r-3}, G[\overline{S}]) 
            & + |G[S]|\cdot N(K_{r-2}, G[\overline{S}]) \\
            & + |S|\cdot N(K_{r-1}, G[\overline{S}])  
             + N(K_{r}, G[\overline{S}]). 
    \end{align*}
\end{fact}
}
\begin{fact}\label{FACT:formula-count-Kr-3-parts}
    Let $G$ be a graph, $S \subseteq V(G)$, and $\overline{S} := V(G)\setminus S$. 
    Suppose that the induced subgraph $G[S]$ is $3$-partite, and the induced bipartite subgraph $G[S, \overline{S}]$ is complete. 
    Then 
    \begin{align*}
        N(K_{r}, G)
        = N(K_3, G[S]) \cdot N(K_{r-3}, G[\overline{S}]) 
            & + |G[S]|\cdot N(K_{r-2}, G[\overline{S}]) \\
            & + |S|\cdot N(K_{r-1}, G[\overline{S}])  
             + N(K_{r}, G[\overline{S}]). 
    \end{align*}
\end{fact}
%
%Suppose that $G$ is a graph with a vertex partition $V(G) = B_1\cup \ldots \cup B_{k-1}$ such that $G$ is the union of $K[B_1, \ldots, B_{k-1}]$ with a triangle-free graph.
%For $i\in [k-1]$, we say the part $B_i$ is \emph{partially full} if $0 < |G[B_i]|< t_{2}(|B_i|)$. 
%
% One simplification when $r\ge 4$, as compared to the case when $r=3$, is that, in the former scenario, there can be at most one partially full part (as shown in the following lemma). 
%

We will also use the following results.

\begin{lemma}\label{LEMMA:one-partially-full-part}
    Let $r\ge 4$ and let $n,e\in\mathbb{N}$ satisfy $t_{r-1}(n)<e\le \binom{n}{2}$. 
    Suppose that $G \in \mathcal{H}^{\min}_{0}(n,e)$ is a graph with a vertex partition $V(G) = B_1\cup \ldots \cup B_{k-1}$ such that $G$ is the union of $K[B_1, \ldots, B_{k-1}]$ with a triangle-free graph. 
    Then $G$ contains at most one  part $B_i$ which is \emph{partially full}, meaning that $0 < |G[B_i]|< t_{2}(|B_i|)$. 
\end{lemma}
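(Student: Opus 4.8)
The plan is to argue by contradiction. Suppose that $G$ has at least two partially full parts, say $B_1$ and $B_2$, and write $x:=|G[B_1]|$ and $y:=|G[B_2]|$, so that $0<x<t_2(|B_1|)$ and $0<y<t_2(|B_2|)$ by the definition of partially full. I will produce another graph in $\mathcal{H}_0(n,e)$ with strictly fewer copies of $K_r$, contradicting $G\in\mathcal{H}^{\min}_0(n,e)$. Set $S:=B_1\cup B_2$ and $\overline S:=V(G)\setminus S$; since $G$ is the union of $K[B_1,\ldots,B_{k-1}]$ with a triangle-free graph (the disjoint union, over the parts, of the triangle-free graphs inside the parts is itself triangle-free), Fact~\ref{FACT:formula-count-Kr-2-parts} applies with $V_1=B_1$ and $V_2=B_2$.

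The first step is to record that $N(K_{r-4},G[\overline S])\ge 1$. This is where the hypothesis $t_{r-1}(n)<e$, equivalently $k\ge r$, is used: by Tur\'an's theorem $G$ contains a copy of $K_r$, and since each $G[B_i]$ is triangle-free, this clique meets every part in at most two vertices. Hence deleting the two parts $B_1,B_2$ removes at most four of its vertices and leaves a copy of $K_{r-4}$ inside $\overline S$, so $N(K_{r-4},G[\overline S])\ge 1$.

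Next I would substitute $|G[S]|=x+y+|B_1||B_2|$ and $|S|=|B_1|+|B_2|$ into the formula of Fact~\ref{FACT:formula-count-Kr-2-parts} and view $N(K_r,G)$ as a function of $x$ with the sum $s:=x+y$ held fixed. Every term except the first depends on $x,y$ only through $s$ and through the (fixed) data of $\overline S$, so, writing $c_4:=N(K_{r-4},G[\overline S])$ and $c_3:=N(K_{r-3},G[\overline S])$, one obtains
\[
  f(x):=N(K_r,G)=-c_4\,x^2+\bigl(c_4 s+c_3(|B_2|-|B_1|)\bigr)x+\mathrm{const},
\]
a strictly concave quadratic in $x$ because $c_4\ge 1$.

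Finally I would exploit this strict concavity. Since the count in Fact~\ref{FACT:formula-count-Kr-2-parts} depends on $G[B_1]$ and $G[B_2]$ only through their numbers of edges, I may replace these two induced subgraphs by arbitrary triangle-free graphs with any prescribed edge counts in $[0,t_2(|B_1|)]$ and $[0,t_2(|B_2|)]$ respectively. As $0<x<t_2(|B_1|)$ and $0<y<t_2(|B_2|)$, both the pairs $(x+1,y-1)$ and $(x-1,y+1)$ are realisable and preserve the total number of edges, hence give graphs in $\mathcal{H}_0(n,e)$. Strict concavity forces $\min\{f(x-1),f(x+1)\}<f(x)$, so one of these graphs has strictly fewer $r$-cliques than $G$, the desired contradiction; therefore $G$ has at most one partially full part. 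The only genuine obstacle is the positivity $c_4\ge 1$, which is exactly the clique-deletion argument in the second step; the remainder is bookkeeping with Fact~\ref{FACT:formula-count-Kr-2-parts}.
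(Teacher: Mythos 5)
Your proof is correct and follows essentially the same route as the paper's: both apply Fact~\ref{FACT:formula-count-Kr-2-parts} to express $N(K_r,G)$ as a strictly concave quadratic in $x$ with the sum $x+y$ held fixed, then shift a single edge between the two partially full parts (keeping both triangle-free) to contradict the minimality of $G$. The only difference is cosmetic: you spell out why $N(K_{r-4},G[\overline{S}])\ge 1$ (via Tur\'an's theorem and the observation that a clique meets each triangle-free part in at most two vertices), where the paper justifies this positivity only by the terse remark ``since $e>t_{r-1}(n)$.''
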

\begin{proof}
    Suppose to the contrary that $G$ contains two partially full parts $B_i$ and $B_j$ for some $1 \le i < j \le k-1$. 
    % Without loss of generality, we may assume that $|G[B_i]| \ge |G[B_j]|$. 
    Let $x:= |G[B_i]|$, $\sigma:= |G[B_i]|+|G[B_j]|$ and $H:=G[V(G)\setminus (B_i\cup B_j)]$. 
    Observe from Fact~\ref{FACT:formula-count-Kr-2-parts} that there exist constants $C_2, C_3, C_4$ depending on $|B_i|$, $|B_j|$ and $H$ (but not on $x$) such that 
    \begin{align*}
        N(K_r, G)
        = N(K_{r-4},H)\cdot x(\sigma-x) + C_2 x + C_3 (\sigma-x) + C_4
        =: P(x). 
    \end{align*}
    Let $G_i$ be the graph obtained from $G$ by moving one edge from $G[B_j]$ to $G[B_i]$ and rearranging the latter graph to be still $K_3$-free, which is possible by Mantel's theorem.   Similarly, let $G_j$ be the graph obtained from $G$ by moving one edge from $G[B_i]$ to $G[B_j]$.
    Note that $N(K_r, G_i) = P(x+1)$ and $N(K_r, G_j) = P(x-1)$. 
    Since $e>t_{r-1}(n)$, we have
    \begin{equation}
        P(x+1) + P(x-1) - 2P(x)
        = - 2 N(K_{r-4},H) 
        < 0.  \label{eq:P}
    \end{equation}
    Thus $\min\left\{N(K_r, G_i), N(K_r, G_j)\right\} < N(K_r, G)$, contradicting the minimality of $G$. 
\end{proof}
The following simple inequality from~\cite{LPS20} will be useful. For completeness, we include its short proof here. 
\begin{lemma}[{\cite[Lemma~4.5]{LPS20}}]\label{LEMMA:d}
    For all integers $a\ge 1$, $k \ge 2$, and $n \ge ak$, we have 
    \begin{align}\label{eq:d}
        a(n-a)+t_{k-1}(n-a)> (a-1)(n-a+1)+t_{k-1}(n-a+1). 
    \end{align}
\end{lemma}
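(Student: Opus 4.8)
The plan is to eliminate the common linear terms by substituting $m := n-a$, and then reduce the whole statement to a single comparison between $a$ and the smallest part size of a Tur\'an graph. First I would rewrite \eqref{eq:d} as $am + t_{k-1}(m) > (a-1)(m+1) + t_{k-1}(m+1)$. Expanding only the linear part gives $am - (a-1)(m+1) = m - a + 1$, so after cancelation the inequality is equivalent to
\[
    t_{k-1}(m+1) - t_{k-1}(m) < m - a + 1 .
\]
Thus everything reduces to controlling the one-vertex increment of the Tur\'an edge count.

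Next I would evaluate that increment exactly. The balanced graph $T_{k-1}(m+1)$ is obtained from $T_{k-1}(m)$ by adding a single vertex to a part of smallest size $\lfloor m/(k-1)\rfloor$; since this new vertex is joined to every vertex outside its own part, its degree (and hence the increment) equals $m - \lfloor m/(k-1)\rfloor$. Before using this, I should check the hypotheses make $T_{k-1}(m)$ a genuine $(k-1)$-partite graph with no empty part: indeed $n \ge ak$ and $a \ge 1$ give $m = n-a \ge a(k-1) \ge k-1$. Substituting the computed value, the target inequality becomes $m - \lfloor m/(k-1)\rfloor < m - a + 1$, i.e. (using integrality) $\lfloor m/(k-1)\rfloor \ge a$.

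Finally I would deduce this last inequality from $n \ge ak$: it gives $m = n - a \ge a(k-1)$, hence $m/(k-1) \ge a$ and so $\lfloor m/(k-1)\rfloor \ge a$, which completes the argument. As a sanity check on intuition, one can read both sides of \eqref{eq:d} as edge counts of complete $k$-partite graphs on $n$ vertices: the left is the graph whose parts are a block of size $a$ together with $T_{k-1}(n-a)$, and the right is obtained by moving one vertex out of the size-$a$ part into the balanced remainder; the condition $n\ge ak$ says the size-$a$ part is a smallest one, so this move increases the imbalance and decreases the edge count.

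The only real obstacle is the exact evaluation of $t_{k-1}(m+1) - t_{k-1}(m)$: one must correctly identify that the new vertex lands in a smallest part, of size $\lfloor m/(k-1)\rfloor$, and avoid an off-by-one in the floor (the naive guess $(m+1) - \lfloor(m+1)/(k-1)\rfloor$ is wrong). It is worth recording that the resulting threshold $\lfloor (n-a)/(k-1)\rfloor \ge a$ is precisely $n \ge ak$, so the hypothesis of the lemma is tight; in particular for $k=2$ it reads $n \ge 2a$ and the reduction gives $n-a \ge a$ with no slack.
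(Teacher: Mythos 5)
Your proof is correct and essentially the same as the paper's: both arguments hinge on the fact that $T_{k-1}(n-a+1)$ arises from $T_{k-1}(n-a)$ by enlarging a smallest part, of size $\lfloor (n-a)/(k-1)\rfloor$, and both reduce the inequality to $\lfloor (n-a)/(k-1)\rfloor \ge a$, which follows from $n\ge ak$. Indeed, the ``sanity check'' interpretation you give at the end (comparing the complete $k$-partite graphs $K_{a_1,\ldots,a_{k-1},a}$ and $K_{a_1,\ldots,a_{k-2},a_{k-1}+1,a-1}$) is precisely how the paper organizes the same computation.
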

\begin{proof}
    Let $a_1 \ge \dots \ge a_{k-1}$ denote the part sizes of $T_{k-1}(n-a)$. If we increase its number of vertices by one, then the part sizes of the new Tur\'{a}n graph, up to reordering, can be obtained by increasing $a_{k-1}$ by one. 
    Thus the difference between the expressions in~\eqref{eq:d} is
    \begin{align}
        |K_{a_1, \ldots, a_{k-1},a}| - |K_{a_1, \ldots, a_{k-2},a_{k-1}+1, a-1}|
        = a_{k-1}a-(a_{k-1}+1)(a-1)
        = a_{k-1}-a+1, 
    \end{align}
    which is positive since $a_{k-1} \ge \floor*{(n-a)/(k-1)} \ge \floor*{(ak-a)/(k-1)}= a$.  
\end{proof}

%%%%%%%%%%%%%%%%%%%%%%%%
\section{Extremal graphs in \texorpdfstring{$\mathcal{H}_0(n,e)$}{H0}}\label{se:H0}
%
% Here, we present the proof of Proposition~\ref{PROP:H0-min-H1-ast}, namely that $\mathcal{H}^{\min}_{0}(n,e) = \mathcal{H}^{\ast}_{1}(n,e)$.
As an intermediate step towards Theorem~\ref{THM:N(Kr,G)-H1}, we will first prove the following result, which determines the extremal graphs in $\mathcal{H}_{0}(n,e)$. 

\begin{proposition}\label{PROP:H0-min-H1-ast}
        For all integers $n \ge r\ge 4$ and $t_{r-1}(n)<e\le \binom{n}{2}$, we have that $\mathcal{H}^{\min}_{0}(n,e) = \mathcal{H}^{\ast}_{1}(n,e)$. 
\end{proposition}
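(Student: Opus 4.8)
The plan is to show both inclusions $\mathcal{H}_1^{\ast}(n,e)\subseteq \mathcal{H}_0^{\min}(n,e)$ and $\mathcal{H}_0^{\min}(n,e)\subseteq \mathcal{H}_1^{\ast}(n,e)$ by first reducing an arbitrary $\mathcal{H}_0(n,e)$-minimiser to a much more rigid shape and then pinning down its part sizes. First I would invoke Lemma~\ref{LEMMA:one-partially-full-part}: any $G\in \mathcal{H}_0^{\min}(n,e)$ is the union of a complete $(k-1)$-partite graph $K[B_1,\dots,B_{k-1}]$ with a triangle-free graph supported on at most one part, say $B_{k-1}$; every other part carries either no edges or a maximum (Mantel) triangle-free graph on $t_2(|B_i|)$ edges. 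Since adding a full Mantel graph into a part effectively splits that part into two (its bipartition classes become two parts of a complete multipartite graph plus nothing extra), I would argue that the "saturated" parts can be re-described so that $G$ is in fact obtained from a complete $k$-partite (or $(k-1)$-partite) skeleton by adding a single triangle-free graph into one part. This lands us inside $\mathcal{H}_1(n,e)$, reducing the problem to optimising over part sizes and the number of non-edges in the one special part.

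Next I would set up the counting via Fact~\ref{FACT:formula-count-Kr-1-part}: writing $S$ for the special (triangle-free) part and $\overline{S}$ for the rest, $N(K_r,G)=|G[S]|\,N(K_{r-2},G[\overline S])+|S|\,N(K_{r-1},G[\overline S])+N(K_r,G[\overline S])$, where $G[\overline S]$ is complete multipartite. The key point is that $N(K_r,G)$ is a function only of the part sizes $\bm a=(a_1,\dots,a_\ell)$ and the edge count $|G[S]|$ in the special part, not of the particular triangle-free structure; this already forces all candidate minimisers with fixed parameters to have equal $K_r$-count and explains why $\mathcal{H}_1^{\ast}(n,e)$ is flat. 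I would then optimise these parameters. Using an exchange/convexity argument in the spirit of Lemma~\ref{LEMMA:d} and the inequality in Fact~\ref{FACT:k,a,m,H,h}(ii), I would show that moving mass between parts to make them as balanced as possible (subject to the edge constraint $e$) strictly decreases the $K_r$-count, and that the triangle-free graph must live between the two smallest parts $A_{k-1}^{\ast}$ and $A_k^{\ast}$ with exactly the deficiency $m^{\ast}$ prescribed by the definition of $\bm a^{\ast}$. The split into the cases $m^{\ast}=0$ and $m^{\ast}>0$ mirrors exactly the two cases in the definition of $\mathcal{H}_1^{\ast}(n,e)$, so matching the optimiser to that family becomes a bookkeeping check.

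The main obstacle I anticipate is the rigidity step: proving that the optimal part sizes are \emph{exactly} $\bm a^{\ast}$ and that the minimiser cannot gain by using an unbalanced $(k-1)$-partite skeleton with a large triangle-free graph (which would correspond to a genuinely different point in $\mathcal{H}_0(n,e)$). Here the hypothesis $r\ge 4$ together with $e>t_{r-1}(n)$ is essential, since it is precisely what makes the second difference in~\eqref{eq:P} strictly negative and forces concentration of the triangle-free edges into a single pair of parts; for $r=3$ the relevant term $N(K_{r-4},H)$ vanishes and the argument breaks, which is consistent with the richer extremal family in~\cite{LPS20}. I would handle this by a discrete second-difference computation showing that shifting one edge or one vertex between parts changes $N(K_r,\cdot)$ by a strictly signed amount, so the minimiser is unique up to the isomorphism type captured by $\mathcal{H}_1^{\ast}(n,e)$; verifying that every element of $\mathcal{H}_1^{\ast}(n,e)$ is actually attained (i.e.\ the reverse inclusion) is then immediate from Fact~\ref{FACT:formula-count-Kr-1-part} since all its members share the count $h_r^{\ast}(n,e)$.
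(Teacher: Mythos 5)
Your outline does track the paper's strategy at the top level: invoke Lemma~\ref{LEMMA:one-partially-full-part}, split the Mantel-saturated parts using uniqueness in Mantel's theorem, observe via Fact~\ref{FACT:formula-count-Kr-1-part} that the $K_r$-count depends only on the part sizes and the number of edges in the special part, and recover the reverse inclusion at the end from the fact that all of $\mathcal{H}_1^{\ast}(n,e)$ has the same count. But the step carrying all the difficulty --- pinning the parameters to $\bm{a}^{\ast}$ and $m^{\ast}$ --- is guided by a principle that is false, so the argument as proposed would drive the configuration in the wrong direction. The extremal vector $\bm{a}^{\ast}$ is \emph{not} ``as balanced as possible'': its last coordinate $a_k^{\ast}$ is as \emph{small} as possible, and only the first $k-1$ coordinates are balanced. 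Concretely, take $r=4$, $n=12$, $e=49$: then $k=4$, $\bm{a}^{\ast}=(4,4,3,1)$, $m^{\ast}=2$, and $N(K_4,H^{\ast})=4\cdot4\cdot3-2\cdot4\cdot4=16$. The balancing move you propose (a vertex from a largest part into the smallest part, keeping $e$ fixed) yields $K_{4,3,3,2}$ with $4$ edges missing inside one pair; this graph lies in $\mathcal{H}_0(12,49)$ and has $4\cdot3\cdot(3\cdot2-4)=24$ copies of $K_4$, while the fully balanced $K_{3,3,3,3}$ with $5$ missing edges has $3\cdot3\cdot(3\cdot3-5)=36$ copies. So balancing exchanges involving the smallest part strictly \emph{increase} the count. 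In the paper, the balancing exchange is applied only to the largest $k-1$ parts (Claim~\ref{CLAIM:Lemma:a-3}); the size of the smallest part is pinned by an entirely different mechanism, namely the canonical choice of the split of the special part, which forces $0\le m'\le c_{k-1}-c_k$ as in~\eqref{eq:m'Ineq}, combined with the arithmetic of Lemma~\ref{LEMMA:d} (Claim~\ref{CLAIM:Lemma:a-5}). One must moreover prove that the skeleton has exactly $k$ parts (Claim~\ref{CLAIM:Lemma:a-4}), a step your sketch never addresses even though splitting the saturated parts can a priori produce up to $2(k-1)$ parts.

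A secondary but real problem is your rigidity claim that the triangle-free graph ``must live between the two smallest parts $A_{k-1}^{\ast}$ and $A_k^{\ast}$ with exactly the deficiency $m^{\ast}$''. When $m^{\ast}=0$ this is false: by definition, $\mathcal{H}_1^{\ast}(n,e)$ --- and hence, by the flatness argument, $\mathcal{H}_0^{\min}(n,e)$ --- contains graphs in which an arbitrary triangle-free graph with $a_i^{\ast}a_k^{\ast}$ edges replaces $T[A_i^{\ast}\cup A_k^{\ast}]$ for \emph{any} $i\in[k-1]$, where $a_i^{\ast}$ may equal $a_{k-1}^{\ast}+1$, so these are genuinely different graphs; there is also the degenerate subcase in which the special part of $G$ is the union of two large parts carrying a complete balanced bipartite graph, so that $G$ is itself complete $k$-partite. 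Sorting out these cases is exactly the case analysis at the end of the paper's proof of the proposition and cannot be dismissed as bookkeeping: indeed, if your rigidity statement were correct it would contradict the reverse inclusion you assert in your final sentence.
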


We will use this result later to prove Theorem~\ref{THM:N(Kr,G)-H1} by induction on the number of parts in a graph in $\mathcal{K}(n,e)$.
Note that, in general, neither $\mathcal{K}(n,e)$ nor $\mathcal{H}_0(n,e)$ is a subfamily of the other. However, when we work on the structure of extremal graphs in $\mathcal{K}(n,e)$ in the proof of Theorem~\ref{THM:N(Kr,G)-H1}, some intermediate graphs may be in~$\mathcal{H}_0(n,e)$.
 
% The proof is divided into two parts. 
% In the first part, we prove that $\mathcal{H}^{\min}_{0}(n,e) = \mathcal{H}^{\ast}_{1}(n,e)$ (see Proposition~\ref{PROP:H0-min-H1-ast}). 
% \todo{TBA}
% In the second part, we prove that $\mathcal{H}^{\min}_{2}(n,e) = \mathcal{H}^{\ast}_{2}(n,e)$ (see Proposition~\ref{PROP:H2-min-H2-ast}), and the proof in this part is much easier than that in part one. 
%Let us start with the proof for $\mathcal{H}^{\min}_{0}(n,e) = \mathcal{H}^{\ast}_{1}(n,e)$, i.e. Proposition~\ref{PROP:H0-min-H1-ast}. 
We need some further preliminaries before we can prove Proposition~\ref{PROP:H0-min-H1-ast}.

Given a graph $G \in \mathcal{H}^{\min}_{0}(n,e)$ with partition $B_1, \ldots, B_{k-1}$, we apply the following modification to $G$ to obtain a new graph $H'= H'(G) \in \mathcal{H}^{\min}_{0}(n,e)$. Note that, in fact, these steps do not depend on~$r$.
\begin{description}
    \item[\textbf{Step 1:}] If there is a part $B_i$ that is partially full in $G$, then let $B:= B_i$ (by Lemma~\ref{LEMMA:one-partially-full-part}, such $B_i$ is unique if it exists). 
    Otherwise, take an arbitrary $i\in [k-1]$ with $|G[B_i]| = t_2(|B_i|)$ and let $B:= B_i$. 
    %replace $G[B_j]$ by a balanced bipartite graph of the same size and 
    Since $|G| > t_{k-1}(n)$, $|G[B_i]|$ cannot be $0$ for all $i \in [k-1]$. Thus, the set $B$ is well-defined. 
    \item[\textbf{Step 2:}] Note that $G-B$ is a complete multipartite graph. 
    Let $A_1, \ldots, A_{t-2}$ denote its parts. 
    Let $a_i := |A_i|$ for $i\in [t-2]$ and assume that $a_1 \ge \dots \ge a_{t-2}$. 
    Note that each original part $B_\ell$ is either $B$, some $A_i$, or the union of two parts $A_i$ and $A_j$.
    \item[\textbf{Step 3:}] Choose integers $a_{t-1} \ge a_t \ge 1$ such that 
    \begin{align*}
        a_{t-1}+a_{t} = |B| \quad\text{and}\quad 
        (a_{t-1}+1)(a_t-1) < |G[B]| \le a_{t-1}a_{t}. 
    \end{align*}
    Note that this is possible by Mantel's theorem since $G[B]$ is triangle-free. 
    Let $A_{t-1}\sqcup A_t = B$ be a partition with $|A_{t-1}|=a_{t-1}$ and $|A_{t}| = a_t$. 
    If $|G[B]| = t_2(|B|)$, then $a_{t-1}=\lceil |B|/2\rceil$ and $a_t=\lfloor |B|/2\rfloor$ and we assume that $A_{t-1}\sqcup A_t = B$ is the original partition of $G[B]$ with the two parts labelled so that $|A_{t-1}|\ge |A_t|$.
    \item[\textbf{Step 4:}] Let $H'$ be obtained from $K[A_1, \ldots, A_t]$ by removing a star whose centre lies in $A_t$ and $m'$ leaves lie in $A_{t-1}$, 
    where
    \begin{equation}\label{eq:m'}
     m':= \sum_{ij\in \binom{[t]}{2}}a_ia_j - e = a_{t-1}a_t - |G[B]|.
     \end{equation} 
    This is possible because, by Step 3, 
    \begin{align}
    \label{eq:m'Ineq}
        0 \le m' 
        = a_{t-1}a_t - |G[B]|
        \le a_{t-1}a_{t} - \left((a_{t-1}+1)(a_t-1)+1\right)
        = a_{t-1}-a_t. 
    \end{align}
    \end{description}
Notice that to obtain $H'$ we only change the structure of $G$ on $B$ while keeping $|G[B]| = |H'[B]|$. Thus $H'\in \mathcal{H}_0(n,e)$ and, since $G[B, V(G)\setminus B]$ is complete bipartite and $G[B]$ is triangle-free, it follows from Fact~\ref{FACT:formula-count-Kr-1-part} that $N(K_r, H') = N(K_r, G)$, and hence, $H'\in \mathcal{H}_0^{\min}(n,e)$. 
\begin{lemma}\label{LEMMA:e}
    For all $r\ge 3$, integers $n$ and $e$ with $t_{r-1}(n)<e\le \binom{n}{2}$ and $G\in \mathcal{H}_{0}^{\min}(n,e)$, the graph $H'$ produced by Steps 1--4 above is isomorphic to $H^{\ast}(n,e)$. 
\end{lemma}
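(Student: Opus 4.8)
The plan is to reduce the statement to an optimisation over the class $\mathcal S$ of all \emph{complete multipartite graphs with a star removed}, that is, graphs of the form $K[A_1,\dots,A_t]$ minus an $m$-edge star whose centre lies in a smallest part and whose leaves lie in a second-smallest part, which happen to be $(n,e)$-graphs. By construction $H'\in\mathcal S$. The reduction in Steps 1--4 only rearranges the triangle-free graph inside the single block $B$ while keeping $|G[B]|$ fixed, and since $G[B,V(G)\setminus B]$ is complete, Fact~\ref{FACT:formula-count-Kr-1-part} gives $N(K_r,H')=N(K_r,G)$, so $H'\in\mathcal H_0^{\min}(n,e)$. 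Every member of $\mathcal S$ with at most $2(k-1)$ parts lies in $\mathcal H_0(n,e)$ (pair up the parts into $k-1$ groups, placing a complete bipartite graph inside each merged part), and in particular $H^{\ast}\in\mathcal H_1^{\ast}\subseteq\mathcal H_0$. Hence $H'$ minimises $N(K_r,\cdot)$ over $\mathcal S$ as well, and it suffices to prove that $H^{\ast}(n,e)$ is, up to isomorphism, the \emph{unique} minimiser over $\mathcal S$; applying this to $H'$ then yields $H'\cong H^{\ast}$.

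Write $\sigma_j(x_1,\dots,x_p)$ for the $j$th elementary symmetric polynomial. For a member of $\mathcal S$ with part sizes $a_1\ge\dots\ge a_t$ and star size $m$, applying Fact~\ref{FACT:formula-count-Kr-1-part} to the block $A_{t-1}\cup A_t$ carrying the star gives
\begin{align*}
N(K_r,\cdot)=\sigma_r(a_1,\dots,a_t)-m\cdot\sigma_{r-2}(a_1,\dots,a_{t-2}),
\end{align*}
while the constraints read $\sum_i a_i=n$ and $\sigma_2(a_1,\dots,a_t)-m=e$. I would first pin down the number of parts $t$. By the definition of $k=k(n,e)$ we have $t_{k-1}(n)<e$, so by Tur\'an's theorem every $(n,e)$-graph contains $K_k$ and thus has chromatic number at least $k$; since a member of $\mathcal S$ is $t$-colourable, $t\ge k$. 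For the reverse inequality I would use the minimality of $H'$ together with the fact that, because $0\le m\le a_{t-1}-a_t$ is small, the second moment $\sum_i a_i^2=n^2-2(e+m)$ is essentially determined, so a partition into more than $k$ parts would have to be strictly more spread out; a local move (splitting/merging parts, the concavity computation behind~\eqref{eq:P}) then shows such configurations have strictly more $r$-cliques, forcing $t=k$.

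With $t=k$ fixed, $\sum_i a_i=n$, and $\sigma_2(a)=e+m$ essentially fixed, I would next show the minimiser has its $k-1$ largest parts balanced (pairwise differing by at most one) together with a distinguished smallest part. Here the rebalancing inequality of Lemma~\ref{LEMMA:d} controls the skeleton and the convexity underlying Lemma~\ref{LEMMA:one-partially-full-part} controls the remaining freedom. Finally the edge equation $\sigma_2(a)-m=e$, the bound $0\le m\le a_{k-1}-a_k$ from~\eqref{eq:m'Ineq}, and the defining minimality of $a_k^{\ast}$ (the smallest admissible size of the distinguished part) together force the part sizes to equal $\bm a^{\ast}$ and $m=m^{\ast}$, which is exactly the graph $H^{\ast}$; uniqueness is part of this identification. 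Combined with $H'\in\mathcal S^{\min}$, this gives $H'\cong H^{\ast}$.

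The main obstacle is the \emph{simultaneous} determination of the number of parts, their balance, and the star size, since these are coupled through the single equation $\sigma_2(a)-m=e$: shrinking the distinguished part forces the others to grow, altering both $\sigma_2$ and $\sigma_r$, and one may trade a larger star against a less balanced skeleton. The symmetric-polynomial formula above makes each local move computable, but one must verify that no combination of moves decreases $N(K_r,\cdot)$ below $N(K_r,H^{\ast})$; in particular, ruling out configurations with more than $k$ parts (which correspond precisely to the extra \emph{full} parts in the reduction from $G$) is the technical heart, and is exactly where Lemma~\ref{LEMMA:d} and the concavity in~\eqref{eq:P} must be combined. For $r=3$, where Lemma~\ref{LEMMA:one-partially-full-part} is unavailable, the dependence of $N(K_3,\cdot)$ on the edge distribution is simpler and the same conclusion can be reached by a direct edge count.
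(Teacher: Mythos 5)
Your outline has the same overall shape as the paper's proof (complete multipartite minus a star; determine the number of parts, then the balance, then the sizes and the star), but at each of the three decisive steps you assert rather than prove, and these assertions conceal genuine difficulties. First, ``by construction $H'\in\mathcal{S}$'' is unjustified when $m'>0$: Steps 2--3 sort only the parts $A_1,\dots,A_{t-2}$ of $G-B$ and split $B$ into $A_{t-1},A_t$ with $a_{t-1}\ge a_t$; nothing guarantees $a_{t-2}\ge a_{t-1}$, i.e.\ that the removed star joins the two \emph{smallest} parts, which both your definition of $\mathcal{S}$ and your formula $\sigma_r(a)-m\,\sigma_{r-2}(a_1,\dots,a_{t-2})$ presuppose. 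The paper must prove exactly this statement (Claim~\ref{CLAIM:Lemma:a-2}), via a local move and a triangle count. Second, your argument for $t\le k$ --- ``the second moment is essentially determined, so more parts means more spread out, and a local move gives strictly more $r$-cliques'' --- is not a proof, and the surrounding logic is shaky: graphs in $\mathcal{S}$ with more than $2(k-1)$ parts need not lie in $\mathcal{H}_0(n,e)$, so the minimality of $H'$ (which is minimality \emph{within} $\mathcal{H}_0(n,e)$) gives no comparison against them; consequently your stated reduction to ``$H^{\ast}$ is the unique minimiser over all of $\mathcal{S}$'' demands an unconstrained optimisation that the hypothesis on $H'$ cannot help with. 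Third, the balancedness of the largest parts is likewise only asserted; note also that the concavity in~\eqref{eq:P} you invoke requires $N(K_{r-4},H)>0$, hence $r\ge 4$, while the lemma covers $r=3$, and your fallback for $r=3$ (``a direct edge count'') is left unspecified.

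The paper resolves these points in a different order, which is what makes the proof go through. It first proves near-balancedness $c_1\le c_{t-1}+1$ (Claim~\ref{CLAIM:Lemma:a-3}) by an explicit comparison: rebalance the three parts $C_1,C_{t-1},C_t$, adjust the star size to keep the edge count, and check that the triangle count inside these three parts strictly drops, which propagates to $K_r$-counts by Fact~\ref{FACT:formula-count-Kr-3-parts}; these moves stay inside $\mathcal{H}_0(n,e)$ because $t\le 2(k-1)$ by construction. Only then does it deduce $t=k$ (Claim~\ref{CLAIM:Lemma:a-4}) --- not by comparing with other configurations at all, but by a direct edge count: balancedness gives $H'[C_1\cup\dots\cup C_{t-1}]\cong T_{t-1}(n-c_t)$, whence $|H'|-t_{t-1}(n)\ge (c_t-1)(c_{t-1}-c_t)+c_t>0$, and together with the trivial bound $e\le t_t(n)$ this forces $t=k$ by the definition~\eqref{equ:def-k}. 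All local moves are triangle-count comparisons multiplied by $N(K_{r-3},\cdot)$, so the whole argument is uniform in $r\ge 3$ and never needs Lemma~\ref{LEMMA:one-partially-full-part}. The one step of your plan that is essentially complete is the endgame: once $t=k$, the parts are balanced and $0\le m'\le c_{k-1}-c_k$, identifying the sizes with $\bm{a}^{\ast}$ and $m'$ with $m^{\ast}$ via Lemma~\ref{LEMMA:d} and the minimality defining $a_k^{\ast}$, which is the paper's Claim~\ref{CLAIM:Lemma:a-5}.
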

\begin{proof}
    To prove that $H' \cong H^{\ast}(n,e)$, it suffices to show that $t = k$ and $(|A_1|, \ldots, |A_{t}|) = \bm{a}^{\ast}$, where $k$ and $\bm{a}^{\ast}$ are as defined in Section~\ref{SEC:introduction}.  

    \begin{claim}\label{CLAIM:Lemma:a-1}
        If $m'=0$, then $|H'[A_{h}\cup A_{i} \cup A_j]| > t_{2}(a_h+a_i+a_j)$ for all $\{h,i,j\}\in \binom{[t]}{3}$. 
        If $m'>0$, then $|H'[A_{h}\cup A_{t-1} \cup A_t]| > t_{2}(a_h+a_{t-1}+a_t)$ for all $h\in [t-2]$. 
    \end{claim}
    \begin{proof}
        Let $S:= A_{h}\cup A_{i} \cup A_j$, with $\{i,j\}=\{t-1,t\}$ if $m'>0$. 
        Suppose to the contrary that $|H'[S]| \le t_{2}(|S|)$. 
        Then let $G_1$ be a new graph obtained from $H'$ by replacing $H'[S]$ with a bipartite graph of the same size. 
        Note that the induced bipartite graph $H'[S, \overline{S}]$ is complete. (Indeed, this is trivially true if $m'=0$ as then $H' = K[A_{1}, \ldots, A_{t}]$; if $m'> 0$, then the only non-complete pair is $[A_{t-1}, A_{t}]$, but both sets lie in $S$.)
        Since $H'$ is $t$-partite, the graph $G_1$ is $(t-1)$-partite (and with at most one non-complete pair of parts). 
        By Steps 2--3, we have $t \le 2(k-1)$. 
        So we can represent $G_1$ as the union of a complete $(k-1)$-partite graph and a triangle-free graph, which implies that $G_1 \in \mathcal{H}_0(n,e)$. 
        It is easy to see from Fact~\ref{FACT:formula-count-Kr-3-parts} that $N(K_r, G_1) \le N(K_r, H')$, since $0 = N(K_3, G_1[S]) \le N(K_3, H'[S])$. 
        So it follows from the minimality of $H'$ that $N(K_3, H'[S]) = 0$. 
        If $\{t-1, t\}$ is not a subset of $\{h,i,j\}$, then $H'[S]$ is a complete $3$-partite graph and contains at least one traingle, contradicting $N(K_3, H'[S]) = 0$. 
        Therefore, $\{t-1, t\} \subseteq \{h,i,j\}$. 
        By symmetry, we may assume that $\{t-1, t\} = \{i,j\}$ (thus being consistent with our earlier assumption if $m'>0$). 
        Note that $|H[A_{t-1}, A_{t}]| \ge 1$, since otherwise, we would have $m' \ge a_{t-1}a_{t} > a_{t-1}-a_{t}$, contradicting~\eqref{eq:m'Ineq}. 
        Note that each edge in $H[A_{t-1}, A_{t}]$ is in $|A_{h}|$  triangles in $H[S]$, contradicting $N(K_3, H'[S]) = 0$. 
    \end{proof}
    \begin{claim}\label{CLAIM:Lemma:a-2}
        If $m'>0$, then $a_{t-2} \ge a_{t-1}$. 
    \end{claim}
    \begin{proof}
        Suppose to the contrary that $a_{t-2} \le a_{t-1}-1$. 
        Then let $G_2$ be a new graph obtained from $H'$ by moving edges from $[A_{t-2}, A_{t}]$ to $[A_{t-1}, A_{t}]$ until this is no longer possible. 
        Let $S:= A_{t-2} \cup A_{t-1} \cup A_{t}$. 
        If $A_{t-2} \cup A_t$ is an independent set in $G_2$ (i.e.\ if $m' \ge a_{t-2}a_{t}$),  then $|H'[S]| = |G_2[S]| \le t_2(|S|)$, contradicting Claim~\ref{CLAIM:Lemma:a-1}. 
        Thus $G_2[S]$ can be viewed as a graph obtained from $K[A_{t-2}, A_{t-1}, A_{t}]$ by removing $m'$ edges from $K[A_{t-2}, A_{t}]$. 
        So $G_2 \in \mathcal{H}_0(n,e)$. 
        Note that 
        \begin{align*}
            N(K_3, G_2[S]) - N(K_3, H'[S])
            = m'\left(a_{t-2} - a_{t-1}\right)
            <0, 
        \end{align*}
        which combined with Fact~\ref{FACT:formula-count-Kr-3-parts} implies that $N(K_r, G_2) - N(K_r, H')<0$, contradicting the minimality of $H'$. 
    \end{proof}
    If $m'>0$, let $C_i:= A_i$ for $i\in [t]$. 
    If $m'=0$, let $C_1, \ldots, C_t$ be a relabelling of $A_1, \ldots, A_t$ so that the sizes of the sets are non-increasing. 
    Regardless of the value of $m'$, the following statements clearly hold: 
    \begin{enumerate}[label=(\roman*)]
        \item $c_1 \ge \dots \ge c_{t}$, where $c_i := |C_i|$ for $i\in [t]$, 
        \item $0 \le m' \le c_{t-1}-c_{t}$,
        \item Claim~\ref{CLAIM:Lemma:a-1} applies to all triples $\{C_i, C_{t-1}, C_{t}\}$ for $i\in [t-2]$. 
    \end{enumerate}
    The rest of the proof is written so that it works for both $m'=0$ and $m'>0$. 
    \begin{claim}\label{CLAIM:Lemma:a-3}
        We have $c_1 \le c_{t-1} + 1$. 
    \end{claim}
    \begin{proof}
        Let $S:= C_1 \cup C_{t-1} \cup C_{t}$. 
        Note that 
        \begin{align*}
            |K_{c_1-1, c_{t-1}+1, c_{t}}| - |H'[S]|
            = m' - c_{t-1}+c_1-1 
            =: m''. 
        \end{align*}
        Suppose to the contrary that $c_1 \ge c_{t-1}+2$. 
        Then $m'' \ge m'+1$.  
        Take a partition $C'_{1} \cup  C'_{t-1} \cup C'_{t} = S$ of sizes $c_1-1, c_{t-1}+1, c_{t}$, respectively. 
        Let $H_{S}$ be the graph obtained from $K[C'_{1}, C'_{t-1}, C'_{t}]$ by removing $m''$ edges between $C'_{t-1}$ and $C'_{t}$. This is possible since $m'' \le (c_{t-1}+1)c_{t}$. (Indeed, otherwise $|H'[S]| \le (c_1-1)(c_{t-1}+c_{t}+1) \le t_{2}(|S|)$, contradicting Claim~\ref{CLAIM:Lemma:a-1}.) We have $|H_{S}| = |H'[S]|$. 
        Let $H''$ be the graph obtained from $H'$ by replacing $H'[S]$ with $H_{S}$. Note that $H'' \in \mathcal{H}_{0}(n,e)$. 
        It follows from $m' \le c_{t-1}-c_{t}$ that  
        \begin{eqnarray*}
            N(K_3, H'[S]) - N(K_3, H''[S])
            & =& \left(c_1c_{t-1}c_{t} - m'c_1\right) \\
            &  -& \left((c_1-1)(c_{t-1}+1)c_{t} - (m'-c_{t-1}+c_1-1)(c_1-1)\right) \\
            & \ge& (c_1 - c_{t})(c_1-c_{t-1}-2)+1
            \ \ge  1, 
        \end{eqnarray*}
        which combined with Fact~\ref{FACT:formula-count-Kr-3-parts} implies that $N(K_r, H') - N(K_r, H'') > 0$, contradicting the minimality of $H'$. 
    \end{proof}
    \begin{claim}\label{CLAIM:Lemma:a-4}
        We have $t = k$.  
    \end{claim}
    \begin{proof}
        It suffices to show that $t_{t-1}(n)< e \le t_{t}(n)$. 
        The upper bound $e \le t_{t}(n)$ is trivial, since $H'$ is $t$-partite. 
        So it remains to show that $e > t_{t-1}(n)$. 
        Let $T:= H'[C_1 \cup \dots \cup C_{t-1}]$. 
        It follows from Claim~\ref{CLAIM:Lemma:a-3} that  $T \cong T_{t-1}(n-c_t)$. 
        Therefore, 
        \begin{align}\label{equ:size-H'}
            |H'| - t_{t-1}(n-c_t) 
            = |H'\setminus T|
            = c_{t}(n-c_t) - m'. 
        \end{align}
        On the other hand, by viewing $T_{t-1}(n)$ as a graph obtained from $T_{t-1}(n-c_t)$ by adding $c_t$ new vertices into some parts, we obtain  
        \begin{align*}
            t_{t-1}(n) - t_{t-1}(n-c_t) 
            \le c_{t}(n-c_{t-1}-1). 
        \end{align*}
        By combining these two inequalities, we obtain 
        \begin{align*}
            |H'|-t_{t-1}(n)
            \ge c_{t}(c_{t-1}+1-c_{t}) - m'
            \ge (c_t-1)(c_{t-1}-c_t)+c_t 
            > 0, 
        \end{align*}
        proving that $e > t_{t-1}(n)$. 
    \end{proof}
    \begin{claim}\label{CLAIM:Lemma:a-5}
        The sequence $\left(|C_1|, \ldots, |C_{k}|\right)$ of part sizes is equal to $\bm{a}^{\ast} = \bm{a}^{\ast}(n,e)$. 
    \end{claim}
    \begin{proof}
        % It follows from  Claim~\ref{CLAIM:Lemma:a-3} that $\sum_{\{i,j\}\in \binom{[k]}{2}}c_ic_j = t_{k-1}(n-c_k)$. 
        % Note that $m'= c_{k-1}c_{k} - |H'[C_{k-1}\cup C_k]|$. 
        % Thus, it follows from the fact $0 \le m \le c_{t-1}-c_{t}$ that
        Recall that $t=k$ and, by~\eqref{equ:size-H'}, we have that 
        \begin{align}
            |H'| - t_{k-1}(n-c_k)
            = c_k(n-c_k) - m'
            \le c_k (n-c_k). \label{eq:a=ck}
        \end{align}
        
Let us show that $c_k$ is the smallest nonnegative integer $a$ satisfying 
$$
 f(a):= a(n-a) + t_{k-1}(n-a) \ge e.
 $$
 This inequality holds for $a=c_k$ by~\eqref{eq:a=ck}.
Note that $c_k \le n/k$ as it is the smallest among $c_1 + \dots + c_k = n$. 
        Thus, by Lemma~\ref{LEMMA:d}, it is enough to check that $a=c_k-1$ violates this condition. 
        Notice that 
        \begin{align*}
            f(c_k-1) - f(c_k)
            \le 2c_{k}-n-1 + (n-c_k-c_{k-1})
            = c_k - c_{k-1} - 1. 
        \end{align*}    
        Therefore, it follows from  $m' \le c_{k-1}-c_{k}$ that 
        \begin{align*}
            f(c_k-1) 
            \le f(c_k) - (m'+1) 
            \le |H'| + m' -(m'+1)
            < |H'|, 
        \end{align*}
        as desired. 
        
        Thus $c_k=a_k^{\ast}$ and (since $t=k$ by Claim~\ref{CLAIM:Lemma:a-4}) we have $(c_1,\dots,c_k)=\bm{a}^{\ast}$ by Claim~\ref{CLAIM:Lemma:a-3}, as desired.
    \end{proof}
    %
     %Since $t = k$ and $\left(|C_1|, \ldots, |C_{k}|\right) = \bm{a}^{\ast} = \bm{a}^{\ast}(n,e)$, it
    Also, it follows from the definitions that $m' = m^{\ast}$ and thus $H'$ is isomorphic to $H^{\ast}(n,e)$.  
    This completes the proof of Lemma~\ref{LEMMA:e}. 
\end{proof}
    Now we are ready to prove Proposition~\ref{PROP:H0-min-H1-ast}. 
    %
    % \begin{proposition}\label{PROP:H0-min-H1-ast}
    %     We have $\mathcal{H}^{\min}_{0}(n,e) = \mathcal{H}^{\ast}_{1}(n,e)$. 
    % \end{proposition}
\begin{proof}[Proof of Proposition~\ref{PROP:H0-min-H1-ast}]
    Let $G\in \mathcal{H}^{\min}_{0}(n,e)$ be arbitrary. 
    Let $B_1, \ldots, B_{k-1}$ be a vertex partition such that $G$ is the union of $K[B_1, \ldots, B_{k-1}]$ with a triangle-free graph $J$. 
    Let $b_i:= |B_i|$ for $i\in [k-1]$. 
    Apply Steps 1--4 to $G$ to obtain a $k$-partite graph $H'$ with parts $A_1, \ldots, A_{k}$. By Lemma~\ref{LEMMA:e}, we have $H' \cong H^{\ast} := H^{\ast}(n,e)$.
    Assume that $|A_i| = a^{\ast}_i$ for $i\in [k]$ and that all missing edges of $H'$ (if any exist) go between $A_{k-1}$ and $A_{k}$. 

    The following claim follows from the definitions of Steps 1--4. 
    \begin{claim}\label{CLAIM:LEMMA:H0-min-H1-ast-1}
        If $i\in [k-1]$ satisfies $|G[B_i]| \in \left\{0, t_{2}(b_i)\right\}$, then $H'[B_i] = G[B_i]$. 
    \end{claim}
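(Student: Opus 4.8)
The claim follows by tracing how the parts $B_1,\dots,B_{k-1}$ are decomposed in Steps~1--4, so the plan is simply to unwind these definitions with a short case analysis. The key structural observation is that, since $G-B$ is complete multipartite, two of its vertices lie in a common part precisely when they are non-adjacent. Consequently each part $A_j$ with $j\in[t-2]$ is contained in a single $B_i$, and for each $i$ with $B_i\ne B$ exactly one of the following holds: either $G[B_i]$ is edgeless, in which case $B_i$ is itself one of the parts $A_j$ $(j\le t-2)$; or $|G[B_i]|=t_2(b_i)$, in which case $G[B_i]$ is the balanced complete bipartite graph and $B_i$ splits into the two parts $A_j,A_{j'}$ $(j,j'\le t-2)$ that are exactly the two sides of this bipartition. (These are the only possibilities, since by Lemma~\ref{LEMMA:one-partially-full-part} every part other than the possibly partially full $B$ satisfies $|G[B_i]|\in\{0,t_2(b_i)\}$.)

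First I would treat the parts $B_i\ne B$. If $G[B_i]$ is edgeless, then $B_i=A_j$ for some $j\le t-2$; since every part $A_\ell$ is independent in $H'$, we get $H'[B_i]=G[B_i]$, both being edgeless. If $|G[B_i]|=t_2(b_i)$, then $B_i=A_j\cup A_{j'}$ with $j,j'\le t-2$, and because the star deleted in Step~4 removes only edges between $A_{t-1}$ and $A_t$, the pair $[A_j,A_{j'}]$ remains complete in $H'$; hence $H'[B_i]=K[A_j,A_{j'}]$, which is precisely the balanced complete bipartite graph $G[B_i]$ on the same bipartition.

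It remains to handle the case $B_i=B$. Since $|G|>t_{k-1}(n)$ forces $|G[B]|>0$ (as observed in Step~1), the hypothesis $|G[B]|\in\{0,t_2(b)\}$ can hold only when $|G[B]|=t_2(b)$; by the dichotomy of Step~1 this occurs only in the ``otherwise'' branch, where $B$ is a full part. In that branch Step~3 takes $A_{t-1}\sqcup A_t=B$ to be the original bipartition of $G[B]$, so that
\begin{align*}
    m' = a_{t-1}a_t - |G[B]| = t_2(|B|) - t_2(|B|) = 0,
\end{align*}
and therefore Step~4 deletes no edges, giving $H'=K[A_1,\dots,A_t]$. Thus $H'[B]=K[A_{t-1},A_t]=G[B]$, which completes the case analysis.

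The argument is entirely routine; the only point requiring care is that one must obtain equality of \emph{labelled} graphs $H'[B_i]=G[B_i]$ rather than mere isomorphism. This is exactly why Step~3 is arranged to reuse the original bipartition of a full part $B$, and why each full part $B_i\ne B$ is forced to split along its own bipartition in the canonical multipartite decomposition of $G-B$. No genuine obstacle arises beyond keeping track of these identifications.
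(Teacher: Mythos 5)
Your proof is correct and takes essentially the same route as the paper: the paper simply asserts that this claim ``follows from the definitions of Steps 1--4,'' and your case analysis (parts $B_i\neq B$ are untouched since the modification only alters edges inside $B$, while $B$ itself can satisfy the hypothesis only when it is a full part, forcing $m'=0$ and the reuse of its original bipartition in Step~3) is exactly the intended unwinding of those definitions. No gaps.
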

    % \begin{proof}
    %     If $|G[B_i]| = 0$, then $B_i = A_j$ for some $j\in [k-2]$, implying that $|H'[B_i]| = 0$. 
    %     If $|G[B_i]| = t_{2}(b_i)$, then 
    % \end{proof}

    Since $H'$ is $k$-partite, it follows from the definitions of Steps 1--4 that exactly one part $B_p$ of $G$ is divided into $A_q \cup A_s$ in Steps 2--3, where, say, $1\le q < s \le k$, while the remaining parts of $G$ correspond to the remaining parts of~$H'$. 
    In particular, $b_p = a_{q}^{\ast} + a_{s}^{\ast}$. 

    \begin{claim}\label{CLAIM:LEMMA:H0-min-H1-ast-2}
        We have $|G[B_p]|>0$. 
    \end{claim}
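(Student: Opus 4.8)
My plan is to identify the divided part $B_p$ with the part $B$ selected in Step~1 and then read off positivity directly from the inequality defining Step~3.

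The first step is to show $B_p=B$. By Steps~2 and~3, each original part $B_\ell$ of $G$ becomes either a single part of $H'$ or the union of two parts of $H'$, and the selected part $B$ is always split into the two parts $A_{t-1}\sqcup A_t$. Since $H'$ has exactly one more part than the $k-1$ parts of $G$ (indeed $H'\cong H^{\ast}$ is $k$-partite by Lemma~\ref{LEMMA:e}), exactly one part of $G$ is split; as $B$ is split, this forces the unique split part $B_p$ to equal $B$, and in particular $|G[B_p]|=|G[B]|$.

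The second step is immediate. By the choice made in Step~3 the sizes satisfy $a_{t-1}\ge a_t\ge 1$ together with
\[
 (a_{t-1}+1)(a_t-1) < |G[B]| \le a_{t-1}a_t .
\]
Since $a_t\ge 1$ gives $a_t-1\ge 0$ while $a_{t-1}+1\ge 1$, the left-hand side is nonnegative, whence $|G[B]|>0$. Together with $|G[B_p]|=|G[B]|$ this proves the claim.

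The only genuinely non-routine point is the identification $B_p=B$; the positivity itself is a one-line consequence of Step~3. If one prefers to avoid this identification, the claim also follows by contradiction: were $|G[B_p]|=0$, then, since Steps~1--4 preserve the number of edges inside each part (Claim~\ref{CLAIM:LEMMA:H0-min-H1-ast-1} for the untouched parts and Step~4 for the modified one), the graph $H'[B_p]=H'[A_q\cup A_s]$ would be edgeless; but this bipartite graph carries $a_q^{\ast}a_s^{\ast}\ge 1$ edges when $\{q,s\}\neq\{k-1,k\}$, and $a_{k-1}^{\ast}a_k^{\ast}-m^{\ast}\ge a_k^{\ast}\ge 1$ edges otherwise (using $m^{\ast}\le a_{k-1}^{\ast}-a_k^{\ast}$ from Fact~\ref{FACT:k,a,m,H,h}(i)), a contradiction.
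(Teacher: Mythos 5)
Your proposal is correct, and your primary argument takes a genuinely different route from the paper's. The paper never identifies the divided part $B_p$ with the part $B$ chosen in Step~1; instead it bounds the edge count of $H'[B_p]$ from below, namely $|H'[B_p]| \ge a_q^{\ast} a_s^{\ast} - m^{\ast} \ge a_q^{\ast} a_s^{\ast} - (a_{k-1}^{\ast} - a_k^{\ast}) > 0$, and then applies Claim~\ref{CLAIM:LEMMA:H0-min-H1-ast-1} in contrapositive form: if $|G[B_p]|$ were $0$, that claim would force $H'[B_p] = G[B_p]$ to be edgeless, a contradiction. You instead pin down $B_p = B$ by a part-counting argument (each of the $k-1$ parts of $G$ contributes one or two parts to the $k$-partite graph $H'$, and $B$ always contributes two since $a_{t-1} \ge a_t \ge 1$), after which positivity is immediate from the defining inequality of Step~3 --- or, tracing back one step, from Step~1, which selects $B$ with $|G[B]| > 0$ in the first place; indeed, the feasibility of Step~3 with $a_t \ge 1$ is equivalent to $|G[B]| \ge 1$, so your one-line deduction is legitimately just unwinding the hypothesis that Steps~1--4 were carried out. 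What your approach buys is transparency: it shows the claim is essentially built into the construction, and it avoids both the edge computation in $H'$ and the case distinction hidden in the paper's formula $|H'[B_p]| = a_q^{\ast} a_s^{\ast} - m^{\ast}$ (which is an exact equality only when $\{q,s\} = \{k-1,k\}$ or $m^{\ast}=0$, though the inequality $\ge$ holds in all cases). What the paper's approach buys is that it needs no identification of $B_p$ with $B$, working only with the final structure $H' \cong H^{\ast}$ together with Claim~\ref{CLAIM:LEMMA:H0-min-H1-ast-1}. Finally, note that your fallback argument in the last paragraph is essentially the paper's own proof, with the case analysis on $\{q,s\}$ made explicit and slightly more carefully than in the paper.
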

    \begin{proof}
        It follows from $m^{\ast} \le a_{k-1}^{\ast} - a_{k}^{\ast}$ that 
        \begin{align*}
            |H'[B_p]| 
            = a_{q}^{\ast} a_{s}^{\ast} - m^{\ast}
            \ge a_{q}^{\ast} a_{s}^{\ast} - (a_{k-1}^{\ast} - a_{k}^{\ast})
            > 0. 
        \end{align*}
        Combined with Claim~\ref{CLAIM:LEMMA:H0-min-H1-ast-1}, we see that $|G[B_p]|>0$. 
    \end{proof}

    Suppose first that $m^{\ast} = 0$. Then $H' = K[A_1, \ldots, A_{k}]$, and $G$ can be obtained from $H'$ by replacing $H'[A_{q}\cup A_{s}]$ with $G[B_p]$. Moreover, $G[B_{p}]$ is a triangle-free graph with $a^{\ast}_{q} + a^{\ast}_{s}$ vertices and $a^{\ast}_{q} a^{\ast}_{s}$ edges. 
    If $a^{\ast}_{s} = a^{\ast}_{k}$, then it follows from the definition of $\mathcal{H}_{1}^{\ast}(n,e)$ that $G \in \mathcal{H}_{1}^{\ast}(n,e)$. 
    Otherwise, $|a^{\ast}_{q} - a^{\ast}_{s}| \le 1$ (by the definition of $\bm{a}^{\ast}$), and hence, $G[B_p]\cong T_{2}(a^{\ast}_{q} + a^{\ast}_{s})$. 
    This implies that $G$ does not contain any partially full part, and hence, $G = H' \in \mathcal{H}_{1}^{\ast}(n,e)$. 

    Suppose that $m^{\ast} > 0$. 
    Since $G[A_i,A_j]$ is complete for all $\{i,j\} \neq \{q,s\}$ and $H'[A_i,A_j]$ is complete iff $\{i,j\} \neq \{k-1, k\}$, we have $\{q,s\} = \{k-1, k\}$. 
    Thus $G$ can be obtained from $K[A_1, \ldots, A_{k}]$ by replacing $K[A_{k-1} \cup A_k]$ with a triangle-free graph with $a^{\ast}_{k-1}a^{\ast}_k - m^{\ast}$ edges. 
    This gives $G \in \mathcal{H}_{1}^{\ast}(n,e)$.
    We conclude that $\mathcal{H}^{\min}_{0}(n,e) \subseteq \mathcal{H}^{\ast}_{1}(n,e)$. 
    Since $\mathcal{H}^{\ast}_{1}(n,e) \subseteq \mathcal{H}_0(n,e)$ and every graph in $\mathcal{H}^{\ast}_{1}(n,e)$ contains the same number of $K_r$'s, we have $\mathcal{H}^{\min}_{0}(n,e) = \mathcal{H}^{\ast}_{1}(n,e)$. 
    \end{proof}%LEMMA

\section{Proof of Theorem~\ref{THM:N(Kr,G)-H1}}\label{SUBSEC:proof-Thm}
%
% \subsection{Proof}
%
%In this subsection, we present the proof of Theorem~\ref{THM:N(Kr,G)-H1}. 
With Proposition~\ref{PROP:H0-min-H1-ast} in hand, we are now ready to prove Theorem~\ref{THM:N(Kr,G)-H1}. 

\begin{proof}[Proof of Theorem~\ref{THM:N(Kr,G)-H1}]
    Fix integers $n \ge r \ge 3$ and $e \le \binom{n}{2}$. 
    Notice that~\eqref{eq:N(Kr,G)-H1} can be reduced to $\min\big\{N(K_r, G) \colon G\in \mathcal{K}(n,e)\big\} \ge  h_{r}^{\ast}(n,e)$, since the other direction is trivially true. 
    Suppose that $G \in \mathcal{K}^{\min}(n,e)$ is a graph obtained from a complete $\ell$-partite graph by adding a triangle-free graph to one part. 
    We aim to show that $N(K_r, G) \ge h_{r}^{\ast}(n,e)$ when $r\ge 3$ and, in addition, $G \in \mathcal{H}_{1}^{\ast}(n,e)$ when $r \ge 4$ and $e> t_{r-1}(n)$. 
    We prove this statement by induction on $\ell + r$.
    Notice that if $\ell = k-1$ (where $k= k(n,e)$) and $r \ge 4$, then $G \in \mathcal{H}_{0}(n,e)$, and it follows from Proposition~\ref{PROP:H0-min-H1-ast} that $G \in \mathcal{H}_{1}^{\ast}(n,e)$, as desired. 
    If $\ell= k-1$ and $r=3$, then $G \in \mathcal{H}_{0}(n,e)$, and it follows from~{\cite[Proposition~1.5]{LPS20}} that $N(K_3, G) \ge h_{3}^{\ast}(n,e)$.
    So the statement is true for all pairs $(\ell, r)$ with $\ell = k-1$ and $r\ge 3$, and this serves as our base case.  
    
    Assume that $\ell \ge k$ and $r \ge 3$. 
    Let $U_1 \cup \dots \cup U_{\ell} = V(G)$ be a partition such that $G$ is obtained from the complete $\ell$-partite graph $K[U_1, \ldots, U_{\ell}]$ by adding a triangle-free graph into $U_{\ell}$. We can assume that $U_{\ell}$ is not an independent set (otherwise consider instead the $(\ell-1)$-partition of $V(G)$ where $U_{\ell-1}$ and $U_\ell$ are merged together). 
    
First, we prove~\eqref{eq:N(Kr,G)-H1}. Assume that $\ell\ge r-1$, as otherwise  $h_r^{\ast}(n,e)=0$ and there is nothing to do.    
    Note that
       $U_{\ell}$ is as large as any other part: if some part $U_i$ has strictly larger size then by moving all edges from $U_\ell$ to $U_i$ (by $|U_i|>|U_\ell|$ there is enough space for this) we strictly decrease the number of $r$-cliques (since $\ell \ge r-1$), a contradiction.
By relabelling parts $U_1, \ldots, U_{\ell-1}$, we may assume that $U_{1}$ is of smallest size among $U_1, \ldots, U_{\ell-1}$. 
    Let $\hat{G}$ denote the induced subgraph of $G$ on $U_2 \cup \dots \cup U_{\ell}$. 
    Let $\hat{n} := n-|U_{1}|$ and $\hat{e} := |\hat{G}|$. 
    Let $\hat{k} := k(\hat{n}, \hat{e})$ be as defined in~\eqref{equ:def-k} (while we reserve $k$ for $k(n,e)$). 
    
    \begin{claim}\label{CLAIM:hat-k<k}
        We have $\hat{k} \le k$. 
    \end{claim}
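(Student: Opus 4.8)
The plan is to reduce the claim to a purely numerical inequality between Turán numbers. Since $\hat{k}=k(\hat n,\hat e)$ and $t_s(\hat n)$ is nondecreasing in $s$, the inequality $\hat k\le k$ is equivalent to $t_k(\hat n)\ge \hat e$. So I would first pin down $\hat e$ exactly. Writing $a:=|U_1|$, the part $U_1$ is distinct from $U_\ell$, and the added triangle-free graph lives entirely inside $U_\ell$; hence every edge incident to $U_1$ joins $U_1$ to another part, and together these form a complete bipartite graph. Deleting $U_1$ therefore removes exactly $|U_1|(n-|U_1|)=a\hat n$ edges, giving $\hat e = e - a\hat n$.

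Next, using $e\le t_k(n)$ (which holds by the definition $k=k(n,e)$), it suffices to prove $t_k(\hat n)\ge t_k(n)-a\hat n$, that is, the clean inequality $t_k(n)-t_k(n-a)\le a(n-a)$. I would establish this as follows: removing any $a$ vertices from the Turán graph $T_k(n)$ leaves a $k$-partite graph on $n-a$ vertices, which has at most $t_k(n-a)$ edges; consequently $t_k(n)-t_k(n-a)$ is at most the number of edges incident to those $a$ vertices, which is at most $a\cdot\Delta(T_k(n))=a\bigl(n-\lfloor n/k\rfloor\bigr)$. It then remains to verify $\lfloor n/k\rfloor\ge a$, equivalently $n\ge ka$. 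This is precisely where the choice of $U_1$ and the standing hypothesis $\ell\ge k$ enter: since $a=|U_1|$ is smallest among $U_1,\dots,U_{\ell-1}$ while $U_\ell$ is a largest part, each of the $\ell-1$ parts $U_2,\dots,U_\ell$ has size at least $a$, so $\hat n\ge(\ell-1)a\ge(k-1)a$ and hence $n=\hat n+a\ge ka$. Combining everything gives $t_k(\hat n)\ge t_k(n)-a(n-a)\ge e-a\hat n=\hat e$, and thus $\hat k\le k$.

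There is no serious obstacle here; the argument is a short Turán-number estimate. The only point that needs genuine care is the bookkeeping identity $\hat e=e-a\hat n$, which relies on the triangle-free modification lying in a part different from the one we delete — guaranteed because we delete a smallest part among $U_1,\dots,U_{\ell-1}$ while the triangle-free graph sits in $U_\ell$. Beyond that, I expect only routine verifications: the maximum-degree formula $\Delta(T_k(n))=n-\lfloor n/k\rfloor$ (attained at a smallest part) and the equivalence $\lfloor n/k\rfloor\ge a\Leftrightarrow n\ge ka$ for integers.
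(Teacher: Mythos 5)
Your proof is correct and takes essentially the same approach as the paper: both arguments produce a $k$-partite graph on $\hat{n}$ vertices with at least $\hat{e}$ edges by deleting $a=|U_1|$ vertices from a largest part of a $k$-partite graph with at least $e$ edges, and both hinge on the same key inequality $a\le n/k$ (equivalently $n\ge ka$), derived exactly as you do from the minimality of $U_1$ among $U_1,\dots,U_{\ell-1}$, the maximality of $U_\ell$, and $\ell\ge k$. The only cosmetic difference is that you delete vertices from the Tur\'an graph $T_k(n)$ (via $t_k(n)-t_k(n-a)\le a\,\Delta(T_k(n))=a\bigl(n-\lfloor n/k\rfloor\bigr)$ and $e\le t_k(n)$), whereas the paper deletes them from the largest part $A_1^{\ast}$ of $H^{\ast}(n,e)$, which has exactly $e$ edges.
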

    \begin{proof}
        Let $H^{\ast} = H^{\ast}(n,e)$ be the $k$-partite graph as defined in Section~\ref{SEC:introduction}. 
        Assume that $A_1^{\ast}, \ldots, A_{k}^{\ast}$ are the corresponding parts of $H^{\ast}$ of sizes $a_1^{\ast} \ge \dots \ge a_{k}^{\ast}$, respectively. 
        It is clear that $|A_1^{\ast}| \ge \frac{n}{k}$. 
        It follows from the minimality of $U_{1}$ that $|U_{1}| \le \frac{n-|U_{\ell}|}{\ell-1} \le \frac{n}{k} \le |A_1^{\ast}|$. 
        Let $W_1 \subseteq A_1^{\ast}$ be a set of size $|U_1|$ and let $H'$ be the induced subgraph of $H^{\ast}$ on $V(H)\setminus W_1$. 
        Observe that $H'$ is still a $k$-partite graph and $|H'| \ge |\hat{G}|$.  
        So it follows from the definition that $\hat{k} \le k$. 
        % On the other hand, if $\hat{k} \le k-2$, then the graph obtained from $H^{\ast}$ by replacing $H^{\ast}[V(H)\setminus W_1]$ with $H^{\ast}(\hat{n}, \hat{e})$  
    \end{proof}

    Note that $\hat{G}$ can be viewed as a graph obtained from a complete $(\ell-1)$-partite graph by adding a triangle-free graph into one part; in particular, $\hat{G}\in\mathcal{K}(\hat{n},\hat{e})$.
    Let $\hat{H}$ be $H^{\ast}(\hat{n}, \hat{e})$ and let $G'$ be the graph obtained from $G$ by replacing $\hat{G}$ with $\hat{H}$. 
    It follows from the inductive hypothesis that 
    \begin{align*}
        N(K_r, \hat{H}) = h_{r}^{\ast}(\hat{n},\hat{e}) \le  N(K_r, \hat{G})
        \quad\text{and}\quad 
        N(K_{r-1}, \hat{H}) \le N(K_{r-1}, \hat{G}).
    \end{align*}
    Hence,
    \begin{align*}
        h_{r}^{\ast}(n,e)
        \le N(K_r, G')
        & = N(K_r, \hat{H}) + |U_1| \cdot N(K_{r-1}, \hat{H})  \\
        & \le N(K_r, \hat{G}) + |U_1| \cdot N(K_{r-1}, \hat{G})
        = N(K_r, G), 
    \end{align*}
    finishing the inductive step for proving~\eqref{eq:N(Kr,G)-H1}. 

    Now suppose that that $r \ge 4$ and $e> t_{r-1}(n)$, and suppose for contradiction that $G\not\in \mathcal{H}_1^{\ast}(n,e)$. Reusing the notation introduced above, 
    let us first derive a contradiction from assuming that
  $\hat{G} \not\in \mathcal{H}_1^{\ast}(\hat{n}, \hat{e})$.
  % Indeed, otherwise the original graph $G$ is $(\hat{k}+1)$-partite with edges missing for an most one pair of parts and, by viewing these two parts as one part with a triangle-free graph added, we see by Claim~\ref{CLAIM:hat-k<k} that $G\in\mathcal{H}_0(n,e)$, contradicting Proposition~\ref{PROP:H0-min-H1-ast}.

If $\hat{e} > t_{r-1}(\hat{n})$, then it follows from the inductive hypothesis that  
    \begin{align*}
        N(K_r, \hat{H}) <  N(K_r, \hat{G})
        \quad\text{and}\quad 
        N(K_{r-1}, \hat{H}) \le N(K_{r-1}, \hat{G}). 
    \end{align*}
    Therefore, 
    \begin{align}
        N(K_r, G')
        & = N(K_r, \hat{H}) + |U_1| \cdot N(K_{r-1}, \hat{H})  \notag\\
        & < N(K_r, \hat{G}) + |U_1| \cdot N(K_{r-1}, \hat{G})
        = N(K_r, G),\label{eq:NKrG'}
    \end{align}
    contradicting the minimality of $G$. 
    
So suppose that $\hat{e} \le t_{r-1}(\hat{n})$. We have that $\ell \ge k \ge r$. Recall that $\hat{G}$ is a graph obtained from an $(\ell-1)$-partite graph by adding a non-empty triangle-free graph. Thus, we have $N(K_r, \hat{H}) = 0 <  N(K_r, \hat{G})$.
    In addition, by~\eqref{eq:N(Kr,G)-H1}, we have $N(K_{r-1}, \hat{H}) = h_{r-1}^{\ast}(\hat{n}, \hat{e}) \le N(K_{r-1}, \hat{G})$. But then the same calculation as in~\eqref{eq:NKrG'} gives a contradiction to the minimality of~$G$.
%    Similarly, we obtain 
%    \begin{align*}
%        N(K_r, G')
%        & = N(K_r, \hat{H}) + |U_1| \cdot N(K_{r-1}, \hat{H})  \\
%        & < N(K_r, \hat{G}) + |U_1| \cdot N(K_{r-1}, \hat{G})
%        = N(K_r, G), 
%    \end{align*}
%    contradicting the minimality of $G$. So this case will not happen. 
    
Thus we have that $\hat{G} \in \mathcal{H}_1^{\ast}(\hat{n}, \hat{e})$.
    Let $\hat{A}_1^{\ast} \cup \ldots \cup \hat{A}_{\hat{k}}^{\ast} = V(\hat{G})$ be the partition of $\hat{G}$ as in the definition of~$\mathcal{H}_{1}^{\ast}(\hat{n},\hat{e})$. 
    Let $B_1 := U_1 \cup \hat{A}_1^{\ast}$, $B_i := \hat{A}_i^{\ast}$ for $2\le i\le \hat{k}-2$, and $B_{\hat{k}-1}:=\hat{A}_{\hat{k}-1}\cup \hat{A}_{\hat{k}}$. 
    We can view $G$ as a graph obtained from $K[B_1, \ldots, B_{\hat{k}-1}]$ by adding triangle-free graphs into two parts, namely $G[B_1]$ and $G[B_{\hat{k}-1}]$.
    Since $\hat{k} \le k$ by Claim~\ref{CLAIM:hat-k<k}, it holds
    %follows  from the definition of $\mathcal{H}_{0}(n,e)$
    that $G \in \mathcal{H}_{0}(n,e)$. 
    Therefore, it follows from Proposition~\ref{PROP:H0-min-H1-ast} that $G \in \mathcal{H}_{1}^{\ast}(n,e)$, finishing the proof of Theorem~\ref{THM:N(Kr,G)-H1}.
\end{proof}

Let us remark that if we replace the family $\mathcal{K}(n,e)$ in Theorem~\ref{THM:N(Kr,G)-H1} by the larger family $\mathcal{K}'(n,e)$ that  consists of all graphs obtained from a complete partite graph by adding a triangle-free graph (that is, we allow to add edges into more than one part)  then the theorem will remain true. Indeed,  for $r\ge 4$, the proof of  Lemma~\ref{LEMMA:one-partially-full-part} (which in fact works for any number of parts) shows that every extremal graph  $\mathcal{K}'(n,e)$ has at most one partially full part and thus belongs to~$\mathcal{K}(n,e)$. For $r=3$, 
%the first part of Theorem~\ref{THM:N(Kr,G)-H1}, that is, 
the equality in~\eqref{eq:N(Kr,G)-H1}, will also remain true (again by  the proof of  Lemma~\ref{LEMMA:one-partially-full-part} except the inequality in~\eqref{eq:P} becomes equality).
%however, the set of graphs minimising the number of triangles has to be enlarged.

%%%%%%%%%%%%%%%%%%%%%%%%%%%%%%%%%%%%
\section{Proof of Proposition~\ref{PROP:N(K_r,H)-H2-ast}}\label{SUBSEC:proof-Prop}
\begin{proof}[Proof of Proposition~\ref{PROP:N(K_r,H)-H2-ast}]
    First, we prove that $N(K_r, H) = h_{r}^{\ast}(n,e)$ for all $H\in \mathcal{H}_{2}^{\ast}(n,e)$. 
    Fix $H \in \mathcal{H}_{2}^{\ast}(n,e)$. 
    
    First consider the case when $(|A_1|,\dots,|A_k|)=\bm{a}^{\ast}$, where the sets $A_1,\dots,A_k$ are as in the definition of $\mathcal{H}_{2}^{\ast}(n,e)$. 
    Let $K := K[A_1, \ldots, A_k]$, and $m_i^{\ast} := |\overline{H}[B_i, A_i]|$ for $i\in I:= \left\{j \in [k-1] \colon |A_j| = |A_{k-1}|\right\}$. 
    Note from the definition of $I$ that for all $i\in I$, we have that
    \begin{align*}
        N(K_{r-2}, K[A_1, \ldots, A_{i-1}, A_{i+1}, \dots, A_{k-1}]) = N(K_{r-2}, K[A_1, \ldots, A_{k-2}]), 
    \end{align*}
     because we count $r$-cliques in two isomorphic graphs.
    Therefore, 
    \begin{align}\label{equ:PROP-K-H}
        N(K_r, K) - N(K_r, H)
        & = \sum_{i\in I}m_i^{\ast} \cdot N(K_{r-2}, K[A_1, \ldots, A_{i-1}, A_{i+1}, \dots, A_{k-1}]) \notag \\
        & = \sum_{i\in I}m_i^{\ast} \cdot N(K_{r-2}, K[A_1, \ldots, A_{k-2}]) \notag \\
        & = m^{\ast} \cdot N(K_{r-2}, K[A_1, \ldots, A_{k-2}])
        = N(K_r, K) - N(K_r, H^{\ast}).   
    \end{align}
    It follows that $N(K_r, H) = N(K_r, H^{\ast}) = h^{\ast}(n,e)$, as desired.

    Now suppose that $(|A_1|,\dots,|A_k|)\not=\bm{a}^{\ast}$. Recall that then $m^{\ast} = 0$, $(|A_{1}|, \ldots, |A_{k}|) = (a_{2}^{\ast}, \ldots, a_{k-1}^{\ast}, a_{1}^{\ast}-1, a_{k}^{\ast}+1)$, $m = a_{1}^{\ast} - a_{k}^{\ast} +1$, and $H$ is a graph obtained from $K[A_1, \ldots, A_{k}]$ by removing some $m$ edges.  
    We may assume that these $m$ edges were removed from parts $[A_{k-1}, A_{k}]$, since this does not affect the value of $N(K_r, H)$ by the calculation in~\eqref{equ:PROP-K-H}. 
    Now, by viewing $H$ as a graph obtained from $K[A_{1}, \ldots, A_{k}]$ by replacing $K[A_{k-1}, A_k]$ with a triangle-free graph, we see that $H \in \mathcal{H}_{1}^{\ast}(n,e)$, and hence, $N(K_r, H) = h^{\ast}(n,e)$.

    Next, we show that there are infinitely many pairs $(n,e) \in \mathbb{N}^2$ with $t_{r-1}(n) < e \le \binom{n}{2}$ such that $\mathcal{H}_{2}^{\ast}(n,e) \setminus \mathcal{H}_{1}^{\ast}(n,e) \neq \emptyset$. It is enough to chose $(n,e)$ so that $a_{k-2}^{\ast}=a_{k-1}^{\ast}$ and $m^{\ast},a_k^{\ast}\ge 2$; the choice that we use (in~\eqref{eq:OurChoice} below) is rather arbitrary.

Take any integers $p \ge r-1$, $q \ge 100$, and $2 \le m \le q$. 
    Let $n := 2pq+ q$ and $e := \binom{p}{2}(2q)^2 + 2pq^2 - m$. Note that $e+m$ is the number of edges in the complete $(p+1)$-partite graph $K_{2q,\dots,2q,q}$ with $p$ parts of size $2q$ and one part of size~$q$.
    The choice of $(p,q,m)$ ensures that 
    \begin{align*}
        e=\binom{p}{2}(2q)^2 + 2pq^2 - m
        > \binom{p}{2}\left(\frac{2pq+q}{p}\right)^2 
        \ge t_{p}(n). 
    \end{align*}
    By $e< e+m\le t_{p+1}(n)$, we have that $k(n,e) = p$. 
    
Let us show that $a_{p}^{\ast} = q$. By Lemma~\ref{LEMMA:d}, it is enough to show that $(q-1)(n-q-1)+t_{k-1}(n-q-1)<e$. The left-hand side here is the size of the graph obtained from the complete partite graph $K_{2q,\dots,2q,q}$ by moving a vertex from the part of size $q$ into one of size $2q$. This results in losing $q+1>m$ edges, giving the required. Thus
        \begin{align}
        a_1^{\ast} = \dots = a^{\ast}_{p-1} = 2q, \quad 
        a_{p}^{\ast} = q, \quad\text{and}\quad  
        m^{\ast} = m.\label{eq:OurChoice}
    \end{align}
    Let $V_1 \cup \dots \cup V_{p+1} = [n]$ be a partition such that $|V_1| = \dots = |V_p| = 2q$ and $|V_{p+1}|=q$. 
    Fix $m$ distinct vertices $v_1, \ldots, v_m \in V_{p+1}$, and choose a vertex $u_i \in V_i$ for every $i\in [m]$. 
    Let $G$ be the graph obtained from $K[V_1, \ldots, V_{p-1}]$ by removing pairs in $\{\{v_i, u_i\} \colon i\in [m]\}$. 
    It is easy to see that $G \in \mathcal{H}_{2}^{\ast}(n,e) \setminus \mathcal{H}_{1}^{\ast}(n,e)$, proving Proposition~\ref{PROP:N(K_r,H)-H2-ast}. 
\end{proof}

%%%%%%%%%%%%%%%%%%%%%%%%%%%%%%%%%%%%%%%%%%%%%%%%
\section*{Acknowledgement}
We would like to thank the anonymous referee for helpful comments. 
%%%%%%%%%%%%%%%%%%%%%%%%%%%%%%%%%%%%%%%%%%%%%%%%%%
\bibliographystyle{abbrv}
\bibliography{MinimumClique}
%%%%%%%%%%%%%%%%%%%%%%%%%%%%%%%%%%%%%%%%%%%%%%%%%
% \begin{appendix}
% \input{appendix}
% \end{appendix}
%%%%%%%%%%%%%%%%%%%%%%%%%%%%%%%%%%%%%%%%%%%%%%%%%%
\end{document}